\theoremstyle{plain}
\newtheorem{theorem}{Theorem}[section]
\newtheorem*{theorem*}{Theorem}
\newtheorem{lemma}[theorem]{Lemma}
\theoremstyle{definition}
\theoremstyle{remark}
\numberwithin{equation}{section}
\newcommand{\wc}{\rightharpoonup} 
\newcommand{\oball}{\mathbf{U}}
\newcommand{\cball}{\mathbf{B}}
\newcommand{\HM}{\mathcal{H}}
\newcommand{\ud}{\ensuremath{\,\mathrm{d}}}
\newcommand{\cf}{\mathscr{C}}
\DeclareMathOperator{\spt}{spt}
\DeclareMathOperator{\id}{id}
\DeclareMathOperator{\Tan}{Tan}
\DeclareMathOperator{\Lip}{Lip}
\DeclareMathOperator{\dist}{dist}
\DeclareMathOperator{\diam}{diam}
\DeclareMathOperator{\ap}{ap} 
\newcommand{\mr}{\mathop{\vrule height 1.6ex depth 0pt width
0.13ex\vrule height 0.13ex depth 0pt width 1.3ex}\nolimits}
\newcommand{\cech}{\check{\mathsf{H}}}
\newcommand{\cc}{\check{\mathscr{C}}}
\begin{document}

\title{On the Reifenberg Plateau Problem in Hilbert space}
\author{Yangqin FANG}
\address{\parbox{\linewidth}{\strut Yangqin Fang\\ School of Mathematics and Statistics\\
Huazhong University of Science and Technology\\
430074, Wuhan, P.R. China\strut}}
\email{yangqinfang@hust.edu.cn}
\subjclass[2020]{Primary: 49J99; Secondaries: 49Q20}
\keywords{Plateau's problem; Minimal sets}

\date{}
\maketitle

\begin{abstract}
	In this paper, we will solve the Reifenberg Plateau Problem in Hilbert
	space. 
\end{abstract}

\section{Introduction}
Plateau's problem is to show the existence of a minimal 
surfaces with a given boundary. There are various variants of Plateau's problem,
which arise from different understanding of the surfaces. The classical form of 
the problem was solved independently by J. Douglas \cite{Douglas:1931 } and
T. Rad\'o \cite{Rado:1930}, in which the surfaces are understood as
parametrizations of the unit disc in $\mathbb{R}^{3}$.

Federer and Fleming introduced currents \cite{FF:1960} in $\mathbb{R}^n$, and
the surfaces are understood as integral currents. The Plateau's problem can be
characterized as minimizing the mass or the size of integral currents with a 
given boundary. Indeed, mass-minimization problem immediately follows from a
compactness theorem, and size-minimization problem is still open. The notion 
of currents was extended to arbitrary metric spaces by Ambrosio and Kirchheim
in \cite{AK:2000c}, and mass-minimization problem in metric space was solved
thereafter.

Reifenberg \cite{Reifenberg:1960} used \v{C}ech homology to depict the
Plateau's problem. The surfaces are directly understood as compact subsets in
$\mathbb{R}^{n}$, and we are going to minimize the $d$-dimensional Hausdorff
measure. But one difficulty is about how to understand the boundary, since
there is no canonical boundary operator such as the boundary operator for
currents. Indeed the difficulty can be overcame by \v{C}ech homology. \v{C}ech
homology can be certainly replaced by other homology to characterize the
Plateau's problem, however new difficulties will occur and are hard to be
overcame when we try to find minimizers. Actually, Reifenberg proved that in
euclidean space there exist minimizers when the coefficient group is a compact
abelian group, recent development indicate that it can be extended to any
abelian group and even on a closed $C^2$ submanifold of $\mathbb{R}^{n}$, see
for example \cite{Pauw:2009,Fang:2013,Fang:2020p}. There are two main
obstacles, which obstruct the way to develop the Plateau's problem in Hilbert
space or Banach space. First, Federer-Fleming projection is a useful tool to
deal with the rectifiablity of pre-minimizers, but it vanished in an infinitely
dimensional space. Second, in a Banach space even of finite dimension, there
does not need to exist a projection that reduce the Hausdorff measure, and that 
gives a lot of difficulties when we locally do the projection of a set onto its
tangent plane. In this paper, we will go around the former, and that allow us 
to solve the Plateau's problem in Hilbert space. Let us start with Reifenberg's
frames, but in Hilbert space.

Let $H$ be a Hilbert space, $B\subseteq H$ be a compact set, $d$ be a positive
integer, $G$ be an abelian group, $L$ be a subgroup of \v{C}ech homology group
$\cech_{d-1}(B;G)$. A compact set $E\subseteq H$ is called spanning $L$ (or with
algebraic boundary containing $L$) if $E\supseteq B$ and
$\cech_{d-1}(i_{B,E})(L)=0$, where $i_{B,E}: B\to E$ is the inclusion mapping and
$\cech_{d-1}(i_{B,E})$ is the homomorphism induced by $i_{B,E}$. We denote by
$\cc_d(H,B,L)$ the collection of all compact subsets $E$ which are spanning $L$.
It is easy to see that $\cc_d(H,B,L)$ is nonempty. If $E\in \cc_d(H,B,L)$ and
$\HM^d(E\setminus B) = \inf\{\HM^d(X\setminus B):X\in \cc_d(H,B,L)\}$, then we
call $E$ a minimizer for $\cc_d(H,B,L)$. The Reifenberg Plateau Problem is to
show the existence of minimizers for $\cc_d(H,B,L)$, i.e. the solutions to the
Plateau's problem. In above settings, we do not exclude the case that $H$ is
of finite dimension.
\begin{theorem} \label{thm:PP}
	Let $H$ be a Hilbert space, $B\subseteq H$ be a compact set, $d$ 
	be a positive integer. Let $G$ be an abelian group, $L\subseteq 
	\check{H}_{d-1}(B;G)$ be a subgroup. Then there exist minimizers for 
	$\cc_d(H,B,L)$.
\end{theorem}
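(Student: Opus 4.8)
The plan is to run the direct method, with the two finite-dimensional tools that are unavailable in $H$ --- the Federer--Fleming projection, and the existence of a convenient linear projection reducing Hausdorff measure --- replaced by cone-type comparisons and by the nearest point projection onto a convex set. We may assume $m:=\inf\{\HM^d(X\setminus B):X\in\cc_d(H,B,L)\}<\infty$, since otherwise every competitor is a minimizer. Put $K:=\overline{\operatorname{conv}}(B)$; by Mazur's theorem $K$ is compact, and the nearest point projection $\pi\colon H\to K$ is well defined and $1$-Lipschitz --- this is the one place where the Hilbert (as opposed to Banach) structure is genuinely used. Since $\pi|_{B}=\id$, for every $E\in\cc_d(H,B,L)$ the set $\pi(E)$ is compact, contains $B$, and satisfies $\cech_{d-1}(i_{B,\pi(E)})=\cech_{d-1}(\pi|_{E})\circ\cech_{d-1}(i_{B,E})$, hence again lies in $\cc_d(H,B,L)$; moreover $\pi(E)\setminus B\subseteq\pi(E\setminus B)$, so $\HM^d(\pi(E)\setminus B)\le\HM^d(E\setminus B)$. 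Therefore I may fix a minimizing sequence $(E_k)$ with $E_k\subseteq K$ for all $k$ and, by the Blaschke selection theorem, pass to a subsequence with $E_k\to E_\infty$ in Hausdorff distance, $E_\infty\subseteq K$ compact and $B\subseteq E_\infty$.

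I would then check that $E_\infty\in\cc_d(H,B,L)$. Since $H$ is an absolute retract every open $U\supseteq E_\infty$ is an ANR, and by the continuity property of \v{C}ech homology an element of $\cech_{d-1}(E_\infty)$ that vanishes in $\cech_{d-1}(U)$ for every such $U$ is zero. Fix $[\sigma]\in L$ and open $U\supseteq E_\infty$: for all large $k$ one has $E_k\subseteq U$, and the inclusion $B\hookrightarrow U$ factors both through $E_\infty$ and through $E_k$; functoriality then gives that the image of $[\sigma]$ in $\cech_{d-1}(U)$ equals $\cech_{d-1}(E_k\hookrightarrow U)\bigl(\cech_{d-1}(i_{B,E_k})([\sigma])\bigr)=0$. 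Letting $U$ shrink to $E_\infty$ yields $\cech_{d-1}(i_{B,E_\infty})([\sigma])=0$, so $E_\infty\in\cc_d(H,B,L)$ and $\HM^d(E_\infty\setminus B)\ge m$.

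It remains to prove $\HM^d(E_\infty\setminus B)\le m$, and this is where essentially all the work lies. The starting observation is that $(E_k)$ is uniformly almost minimizing: whenever $\Phi\colon H\to H$ is continuous, fixes $B$, and equals the identity outside an open set $U$ with $\overline U\cap B=\emptyset$, the set $\Phi(E_k)$ belongs to $\cc_d(H,B,L)$, and comparing its measure with $m$ gives $\HM^d(E_k\cap U)\le\HM^d(\Phi(E_k\cap U))+\varepsilon_k$ with $\varepsilon_k\to0$. The point is that this notion of competitor is flexible enough --- only a continuous, $B$-fixing map is required, not a polyhedral deformation --- to admit cone-type competitors; feeding these into the almost-minimality inequality, and using Eilenberg's coarea inequality (valid in any metric space) to pick good radii, I would extract a uniform lower density bound: there is $c_0=c_0(d)>0$ such that $\HM^d(E_k\cap B(x,r))\ge c_0 r^d$ whenever $x\in E_k$, $\dist(x,B)\ge t$ and $0<r\le r_0(t)$. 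A Go\l\k{a}b-type argument then finishes: letting $\mu$ be the weak-$*$ limit of $\HM^d\mr(E_k\setminus B)$ on $K$, one has $\spt\mu\subseteq E_\infty$ and $\mu(K)=\lim_k\HM^d(E_k\setminus B)=m$; the density bound, transferred along the Hausdorff convergence, makes the upper $d$-density of $\mu$ bounded below by a positive constant at every point of $E_\infty\setminus B$; and a monotonicity/blow-up analysis of the almost minimizers sharpens this to $\HM^d\mr(E_\infty\setminus B)\le\mu$, whence $\HM^d(E_\infty\setminus B)\le\mu(K)=m$. Together with the preceding step, $E_\infty$ is a minimizer.

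The whole difficulty is concentrated in this last step --- precisely, in the uniform lower density estimate and its sharpening. In $\mathbb{R}^n$ both are obtained by projecting a would-be pre-minimizer onto the $(d-1)$-skeleton of a fine grid, i.e.\ via Federer--Fleming; in $H$ this must be replaced altogether by cone competitors, admissible thanks to the weak closure properties of $\cc_d(H,B,L)$ and with mass controlled by the coarea inequality, and the blow-up analysis must be carried through using only the almost-minimality inequality. Making this substitution work --- in particular controlling and identifying the blow-up limits of the $E_k$ without the regularity theory that the Federer--Fleming projection usually supplies --- is the technical core of the paper, and the step I expect to be hardest.
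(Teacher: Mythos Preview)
Your outline and the paper's proof diverge at the most important point. The paper does \emph{not} run the direct method on a minimizing sequence inside $K=\overline{\operatorname{conv}}(B)$; instead it uses Grothendieck's lemma to produce an increasing chain of \emph{finite-dimensional} subspaces $V_n$ with $\dist(B,V_n)\to 0$, sets $B_n=\pi_n(B)$, $L_n=\cech_{d-1}(\pi_n)(L)$, and invokes the already-known Euclidean result to pick a genuine minimizer $X_n\in\cc_d(V_n,B_n,L_n)$. Because each $X_n\setminus B_n$ is a minimal set in a Euclidean open set, the finite-dimensional regularity theory applies: $X_n\setminus B_n$ is $d$-rectifiable and, via the sharp monotonicity formula (the paper's Lemma~\ref{prop:de} with $M\equiv 1$, $\varepsilon\equiv 0$), has $\Theta^d_{X_n}(x,r)$ nondecreasing and $\Theta^d_{X_n}(x)\ge 1$ everywhere on $X_n\setminus B_n$. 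This density bound passes to the weak limit $\mu$ of $\HM^d\mr(X_{n_k}\setminus B_{n_k})$ and yields $\HM^d\mr(\spt\mu\setminus B)\le\mu$ with constant $1$. A separate short lemma shows $\limsup_n\HM^d(X_n\setminus B_n)\le\inf\{\HM^d(E\setminus B):E\in\cc_d(H,B,L)\}$ by pushing competitors from $H$ to $V_n$ with a cut-off interpolation between $\id$ and $\pi_n$, and Hausdorff/inverse-limit arguments show $X:=B\cup\spt\mu\in\cc_d(H,B,L)$. The point is that Federer--Fleming, rectifiability, and the sharp monotonicity constant are never needed in $H$; they are used only inside each $V_n$, where they are classical.

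Your route has a genuine gap precisely at the step you flag as hardest. Cone competitors together with the Eilenberg inequality give, for an \emph{arbitrary} almost-minimizing sequence, only a differential inequality of the form $v(r)\le C\,r\,v'(r)+\varepsilon_k$ with a dimensional constant $C$ that is not the sharp $1/d$ unless you already know the set is rectifiable (this is exactly why the paper's Lemma~\ref{prop:de} is stated for rectifiable $E$ and computes the Jacobian via the approximate tangent plane). Consequently you obtain at best $\Theta^d_\mu\ge c_0$ with some $c_0<1$, hence only $\HM^d(E_\infty\setminus B)\le c_0^{-1}m$, not $\le m$. Upgrading $c_0$ to $1$ would require either rectifiability of the $E_k$ (not available for a raw minimizing sequence, and you have no Federer--Fleming cleanup in $H$) or a Preiss-type density-implies-rectifiability theorem for $\mu$, which is a Euclidean statement and does not hold in a general compact subset of Hilbert space. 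The paper's finite-dimensional reduction is exactly the device that sidesteps this obstacle: it replaces your almost-minimizing $E_k$ by honest minimizers $X_n$ living in $\mathbb{R}^{\dim V_n}$, where the sharp constant comes for free.
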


\section{Preliminary}
Let $(X,\rho)$ be a metric space. For any $Y\subseteq X$ and $x\in X$, we
define the distance from $x$ to $Y$ by  
\[
	\dist(x,Y)=\inf\big\{\rho(x,y):y\in Y\big\}.
\]
For any set $E,F\subseteq X$, the Hausdorff distance $d_{\HM}(E,F)$ is defined by 
\[
	d_{\HM}(E,F) = \max\left\{\sup_{x\in E}\dist(x,F),\sup_{y\in F}\dist(y,E)
	\right\}.
\]
Let $\mathscr{C}_X$ be the collection of compact subset in $X$. If $(X,\rho)$ 
is complete, then $(\mathscr{C}_X,d_{\HM})$ is complete; if $(X,\rho)$ is 
compact, then $(\mathscr{C}_X,d_{\HM})$ is compact. For any set $Y\subseteq X$,
the diameter of $Y$ is defined by 
\[
\diam(Y)=\sup\{\rho(y,z):y,z\in Y\}.
\]
 For any set $E\subseteq X$
and number $s\geq 0$, the $s$-dimensional Hausdorff measure $\HM^s(E)$ is
defined by
\[
	\HM^s(E) = \lim_{\delta\to 0+} \inf\left\{\sum_{i=1}^{\infty}\diam(A_i)^s:
	E\subseteq \bigcup_{i=1}
	^{\infty}A_i,\diam(A_i)\leq \delta\right\}.
\]

We denote by $\cf_c(X,\mathbb{R})$ the set of continuous
functions from $X$ to $\mathbb{R}$ which have compact support, and denote by
$\mathscr{R}(X)$ the set of all Radon measures on $X$, which is always endowed
with weak topology given by saying that $\mu_k\wc \mu$ if and only if
$\mu_k(f)\to \mu(f)$ for any $f\in \cf_c(X,\mathbb{R})$. Indeed, the weak
topology is the topology generated by sets
\[
	\{\mu\in \mathscr{R}(X): a<\mu(f)<b\}, \ a, b\in \mathbb{R}, \ f\in
	\cf_c(X,\mathbb{R}).
\]
Let $\{\mu,\mu_k\}$ be a sequence of Radon measure on $X$. If $X$ is locally
compact and $\mu_k\wc \mu$, then for any open set $O\subseteq X$ and compact 
set $K\subseteq X$, we have 
\[
	\mu(O)\leq \liminf_{k\to \infty}\mu_k(O)
\]
and 
\[
	\mu(K)\geq \limsup_{k\to \infty} \mu_k(K).
\]
If $X$ is locally compact, then for any $c>0$, $\{\mu\in \mathscr{R}(X): 
\mu(X)\leq c\}$ is compact with respect to the weak topology.

For any $x\in X$ and $r>0$,
we denote by $\oball(x,r)$, $\cball(x,r)$ and $\partial \cball(x,r)$ the open
ball, the closed ball and the sphere centered at $x$ of radius $r$ respectively.
For any Radon measure $\mu$ on $X$, positive integer $d$, $x\in X$ and $r>0$, 
we define 
\[
	\Theta_{\mu}^d(x,r)=\frac{\mu(\cball(x,r))}{\omega_d r^d},
\]
where $\omega_d$ is the $d$-dimensional Hausdorff measure of the unit ball in
the euclidean space $\mathbb{R}^d$. The $d$-dimensional upper density and
lower density of $\mu$ at $x$ are defined by 
\[
	\overline{\Theta}_{\mu}^d(x) = \limsup_{r\to 0+}\Theta_{\mu}^d(x,r)
\]
and 
\[
	\underline{\Theta}_{\mu}^d(x) = \liminf_{r\to 0+}\Theta_{\mu}^d(x,r)
\]
respectively. If the two coincide, then the common value is called the density
of $\mu$ at $x$, and we denote it by $\Theta_{\mu}^d(x)$.
For any set $E\subseteq X$, $\Theta_E^d(x,r)$ and
$\Theta_E^d(x)$ are respectively defined as $\Theta_{\mu}^d(x,r)$ and
$\Theta_{\mu}^d(x)$ with $\mu= \HM^d\mr E$. If $E\subseteq X$ is
$\HM^d$-measurable and $\HM^d(E)<\infty$, then $2^{-d}\leq \overline{\Theta}
_{E}^d(x)\leq 1$ for $\HM^d$-a.e. $x\in E$, see \cite[Remark 3.7]{Simon:1983}. 
If $E\subseteq X$ is $\HM^d$-measurable and
$d$-rectifiable, then $\Theta_{E}^d(x)=1$ for $\HM^d$-a.e. $x\in E$, see
\cite{Kirchheim:1994}.

A set $E\subseteq X$ is called $d$-rectifiable, if there exits a sequence of
Lipschitz mapping $f_i:\mathbb{R}^d\to X$ such that 
\[
	\HM^d \left(E\setminus \bigcup_{i=1}^{\infty} f_i\big(\mathbb{R}^d\big)\right)=0.
\]
A set $F\subseteq X$ is called purely $d$-unrectifiable if for any
$d$-rectifiable set $E\subseteq X$, $\HM^d(F\cap E)=0$. For any
$\HM^d$-measurable set $E\subseteq X$ of finite $\HM^d$ measure, it can be
decomposed as the union of a $d$-rectifiable $\HM^d$-measurable set and a purely
$d$-unrectifiable $\HM^d$-measurable set, i.e. $E=E^{irr} \sqcup E^{rec}$, the
decomposition is indeed unique up to a set of $\HM^d$ measure zero. 

\begin{lemma} [$5r$-covering lemma]
	Let $(X,\rho)$ be a metric space, $\mathcal{B}$ be a family of open balls in
	$X$ such that $\sup\{\diam(B):B\in \mathcal{B}\}<\infty$. Then there is a
	subfamily $\mathcal{B}'\subseteq \mathcal{B}$ consisting of mutually disjoint
	balls such that 
	\[
		\bigcup_{B\in \mathcal{B}} B \subseteq \bigcup_{B\in \mathcal{B}'} 5 B.
	\]
	In addition, if $X$ is separable, or there is a finite Borel measure $\mu$
	on $X$ such that $\mu(B)>0$ for all $B\in \mathcal{B}$, then $\mathcal{B}'$ is countable.
\end{lemma}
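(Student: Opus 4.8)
The plan is to run the classical greedy selection, organised by a geometric stratification of the balls according to their radii. For each $B\in\mathcal B$ fix a centre $x_B$ and a radius $r_B$ with $B=\oball(x_B,r_B)$, and set $R:=\sup_{B\in\mathcal B}r_B$; we may assume $R<\infty$, since shrinking any recorded radius exceeding $2\diam(B)$ down to $2\diam(B)$ alters neither $B$ nor $x_B$ (one has $B\subseteq\cball(x_B,\diam(B))\subseteq\oball(x_B,2\diam(B))\subseteq B$ in that case), while the hypothesis $\sup_{B}\diam(B)<\infty$ then forces $R<\infty$. Partition $\mathcal B=\bigcup_{j\ge 0}\mathcal B_j$ with
\[
\mathcal B_j=\bigl\{B\in\mathcal B:\ R\,2^{-j-1}<r_B\le R\,2^{-j}\bigr\},
\]
so that a ball in $\mathcal B_j$ has radius strictly more than half that of any ball in $\mathcal B_0\cup\cdots\cup\mathcal B_j$.

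Next I would choose the subfamily layer by layer. Using Zorn's lemma, let $\mathcal B'_0$ be a maximal pairwise disjoint subfamily of $\mathcal B_0$; having selected $\mathcal B'_0,\dots,\mathcal B'_{j-1}$, let $\mathcal B'_j$ be a maximal pairwise disjoint subfamily of $\bigl\{B\in\mathcal B_j:\ B\cap B'=\emptyset\ \text{for every}\ B'\in\mathcal B'_0\cup\cdots\cup\mathcal B'_{j-1}\bigr\}$, and put $\mathcal B'=\bigcup_{j\ge 0}\mathcal B'_j$. By construction $\mathcal B'$ consists of pairwise disjoint balls. To check the covering property, fix $B\in\mathcal B$, say $B\in\mathcal B_j$. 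By maximality of $\mathcal B'_j$ there is some $B'\in\mathcal B'_0\cup\cdots\cup\mathcal B'_j$ with $B\cap B'\neq\emptyset$ (otherwise $B$ could be adjoined to $\mathcal B'_j$); pick $z\in B\cap B'$. Since $B'$ lies in some layer $\mathcal B_i$ with $i\le j$, we have $r_B<2r_{B'}$, and hence for every $y\in B$,
\[
\rho(y,x_{B'})\le \rho(y,x_B)+\rho(x_B,z)+\rho(z,x_{B'})< r_B+r_B+r_{B'}\le 5\,r_{B'},
\]
so $y\in\oball(x_{B'},5r_{B'})=5B'$. Therefore $\bigcup_{B\in\mathcal B}B\subseteq\bigcup_{B'\in\mathcal B'}5B'$.

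For the countability clause, suppose first $X$ is separable and pick a countable dense set $D\subseteq X$. Each ball in $\mathcal B'$ is nonempty and open, hence meets $D$; choosing one point of $D$ inside each ball of $\mathcal B'$ defines a map $\mathcal B'\to D$ which is injective because the balls of $\mathcal B'$ are pairwise disjoint, so $\mathcal B'$ is countable. Suppose instead there is a finite Borel measure $\mu$ on $X$ with $\mu(B)>0$ for all $B\in\mathcal B$. For each $n\ge 1$, pairwise disjointness gives $\mu(X)\ge\sum_{B\in\mathcal B',\,\mu(B)>1/n}\mu(B)\ge \tfrac1n\,\card\{B\in\mathcal B':\mu(B)>1/n\}$, so $\{B\in\mathcal B':\mu(B)>1/n\}$ is finite; since $\mathcal B'=\bigcup_{n\ge1}\{B\in\mathcal B':\mu(B)>1/n\}$, it is countable.

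I do not expect a serious obstacle: the whole argument is soft. The only genuine design choice is the geometric stratification by radius, which simultaneously makes the greedy step well defined at every level and strong enough to produce the factor $5$ via the triangle inequality above. The appeal to Zorn's lemma at each layer (i.e.\ the axiom of choice) and the small amount of bookkeeping about open versus closed balls and radius versus diameter are the only places that need any care.
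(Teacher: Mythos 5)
Your argument is correct, and it is essentially the standard Vitali-type proof: the paper itself does not prove this lemma (it only cites Simon, Mattila, and Heinonen), and your dyadic stratification by radius plus layer-by-layer maximal disjoint selection is exactly the argument found in those references, including the correct handling of the radius-versus-diameter issue and both countability clauses. The only loose end is the degenerate case $\diam(B)=0$ (an isolated point recorded with a large radius), where replacing $r_B$ by $2\diam(B)$ would give the empty set; this is repaired by instead taking $\min(r_B,R_0)$ for any fixed $R_0>0$, and does not affect the substance of the proof.
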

The proof can be found for example in \cite{Simon:1983,Mattila:1995, Heinonen}. As a
direct consequence of the above lemma, we have the following lemma which we
may use later.
\begin{lemma}
Let $\mu$ be a Radon measure on $X$, $0<t<\infty$, and $A\subseteq X$ be a 
$d$-rectifiable Borel set.
\begin{itemize}
	\item If $\overline{\Theta}_{\mu}^d(x)\geq t$ for all $x\in A$, then $\mu(A)
		\geq t \HM^d(A)$.
	\item If $\underline{\Theta}_{\mu}^d(x)\leq t$ for all $x\in A$, then $\mu(A)\leq t
		\HM^d(A)$.
\end{itemize}
\end{lemma}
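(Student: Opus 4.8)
I would treat this as a standard density comparison and deduce both parts from the $5r$-covering lemma together with the rectifiable structure of $A$. Write $\lambda=\HM^d\mr A$. Since $A$ is $d$-rectifiable, $\Theta_A^d(x)=1$ for $\HM^d$-a.e.\ $x\in A$; in particular $\lambda(\cball(x,2r))/\lambda(\cball(x,r))\to 2^d$ for $\lambda$-a.e.\ $x$, so $\lambda$ is asymptotically doubling and the Vitali covering theorem is available for $\lambda$: every fine cover of a set by closed balls contains a countable disjoint subfamily covering $\lambda$-almost all of the set. We may assume $\HM^d(A)<\infty$ (otherwise exhaust $A$ by Borel subsets of finite $\HM^d$-measure), so that $\lambda$ is finite, and we put $A_0=\{x\in A:\Theta_A^d(x)=1\}$, so $\lambda(A\setminus A_0)=\HM^d(A\setminus A_0)=0$.

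For the first assertion we may assume $\mu(A)<\infty$, fix $\varepsilon>0$, and by outer regularity of $\mu$ choose an open $U\supseteq A$ with $\mu(U)<\mu(A)+\varepsilon$. For each $x\in A_0$ there are arbitrarily small $r>0$ with $\cball(x,r)\subseteq U$, with $\mu(\cball(x,r))\ge(t-\varepsilon)\omega_d r^d$ (since $\overline{\Theta}_\mu^d(x)\ge t$), and with $\lambda(\cball(x,r))\le(1+\varepsilon)\omega_d r^d$ (since $\Theta_A^d(x)=1$); these closed balls form a fine cover of $A_0$, so the Vitali covering theorem for $\lambda$ gives a countable disjoint subfamily $\{\cball(x_i,r_i)\}_i$ with $\lambda\bigl(A_0\setminus\bigcup_i\cball(x_i,r_i)\bigr)=0$. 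Disjointness and $\cball(x_i,r_i)\subseteq U$ yield $\sum_i\mu(\cball(x_i,r_i))\le\mu(U)$, while the two density bounds yield $\mu(\cball(x_i,r_i))\ge\tfrac{t-\varepsilon}{1+\varepsilon}\lambda(\cball(x_i,r_i))$; hence
\[
	\HM^d(A)=\lambda(A_0)\le\sum_i\lambda(\cball(x_i,r_i))\le\frac{1+\varepsilon}{t-\varepsilon}\sum_i\mu(\cball(x_i,r_i))\le\frac{1+\varepsilon}{t-\varepsilon}\,\mu(A),
\]
and letting $\varepsilon\to 0$ gives $\mu(A)\ge t\HM^d(A)$.

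For the second assertion the same scheme, now with an open $U\supseteq A$ with $\lambda(U)<\lambda(A)+\varepsilon$ and with closed balls $\cball(x,r)\subseteq U$, $x\in A_0$, satisfying $\mu(\cball(x,r))\le(t+\varepsilon)\omega_d r^d$ and $\lambda(\cball(x,r))\ge(1-\varepsilon)\omega_d r^d$, produces a disjoint subfamily $\{\cball(x_i,r_i)\}_i$ with $\lambda\bigl(A_0\setminus\bigcup_i\cball(x_i,r_i)\bigr)=0$ and $\mu\bigl(A_0\cap\bigcup_i\cball(x_i,r_i)\bigr)\le\tfrac{t+\varepsilon}{1-\varepsilon}(\HM^d(A)+\varepsilon)$. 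What remains — and this is the step I expect to be the main obstacle — is that the $\HM^d$-null sets $A\setminus A_0$ and $A_0\setminus\bigcup_i\cball(x_i,r_i)$ are also $\mu$-null, i.e.\ that $\mu\mr A\ll\HM^d\mr A$; here the hypothesis is used at \emph{every} point of $A$. If $S\subseteq A$ is Borel with $\HM^d(S)=0$, then $S$ is $d$-rectifiable, so, by the decomposition underlying Kirchheim's theorem, $S$ is, up to an $\HM^d$-null remainder, a countable disjoint union of bi-Lipschitz images $f_j(K_j)$ of Borel sets $K_j\subseteq\mathbb{R}^d$ with $\Lip(f_j),\Lip(f_j^{-1})\le 1+\varepsilon$; since $\HM^d(f_j(K_j))=0$ we get $\mathcal{L}^d(K_j)=0$, and the finite Radon measure $(f_j^{-1})_\#(\mu\mr f_j(K_j))$ on $\mathbb{R}^d$ has lower $d$-density $\le(1+\varepsilon)^d t<\infty$ at every point, hence is absolutely continuous with respect to $\mathcal{L}^d$ (Lebesgue differentiation theorem), hence assigns $K_j$ measure $0$; summing gives $\mu(f_j(K_j))=0$ for each $j$, and the $\HM^d$-null remainder is disposed of by iterating. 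Granting $\mu\mr A\ll\HM^d\mr A$, the displayed estimate yields $\mu(A)\le\tfrac{t+\varepsilon}{1-\varepsilon}(\HM^d(A)+\varepsilon)$, and $\varepsilon\to 0$ finishes the proof. One may also avoid the Vitali covering theorem at the cost of a dimensional factor $5^d$ coming straight from the $5r$-covering lemma, and then remove that factor by passing to the bi-Lipschitz charts $f_j(K_j)$ with $\Lip(f_j),\Lip(f_j^{-1})\to 1$.
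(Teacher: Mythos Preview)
The paper does not prove this lemma at all; it simply cites Theorem~3.2 in Simon's lecture notes, which is set in~$\mathbb{R}^n$ and, incidentally, is phrased with the \emph{upper} density in both parts (carrying a factor~$2^d$ in the upper bound) rather than the lower density and sharp constant recorded here. So your write-up is already far more than what the paper offers, and the comparison is not ``same approach versus different approach'' but ``citation versus actual argument''.

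Your treatment of the first bullet is correct. The rectifiability of $A$ gives $\Theta_A^d=1$ $\HM^d$-a.e., hence $\lambda=\HM^d\mr A$ is asymptotically doubling and Vitali's covering theorem is available for $\lambda$; the chain of inequalities then goes through. (Minor slip: the last inequality should read $\le\tfrac{1+\varepsilon}{t-\varepsilon}\,\mu(U)\le\tfrac{1+\varepsilon}{t-\varepsilon}(\mu(A)+\varepsilon)$, not $\tfrac{1+\varepsilon}{t-\varepsilon}\,\mu(A)$, but this is harmless in the limit.)

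The second bullet has a genuine gap precisely where you anticipate it. To conclude $\mu(A)\le t\,\HM^d(A)$ you need $\mu\mr A\ll\HM^d\mr A$, and your plan is: given $S\subseteq A$ with $\HM^d(S)=0$, decompose $S$ into bi-Lipschitz pieces $f_j(K_j)$ plus an $\HM^d$-null remainder, push each piece to $\mathbb{R}^d$, and use Euclidean differentiation there. The difficulty is that a set with $\HM^d(S)=0$ is $d$-rectifiable for the trivial reason that the \emph{entire} set may be taken as the remainder in its Kirchheim decomposition; there need not be any nontrivial bi-Lipschitz pieces. Your phrase ``disposed of by iterating'' therefore does not close the argument: reapplying the decomposition to the remainder reproduces the same situation. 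Even if instead you decompose $A$ itself as $A=\bigsqcup_j f_j(K_j)\cup N$ with $\HM^d(N)=0$ and handle $S\cap f_j(K_j)$ chart by chart (this part does work, via Besicovitch--Vitali in $\mathbb{R}^d$), you are still left with $\mu(S\cap N)\le\mu(N)$, and nothing in your hypotheses forces $\mu(N)=0$ in a general metric space where no Besicovitch-type covering for $\mu$ is available. This is not a cosmetic issue: the step genuinely requires a Vitali relation for $\mu$, which in $\mathbb{R}^n$ comes from Besicovitch's covering theorem but is absent in infinite-dimensional Hilbert space. Fortunately, only the first bullet is invoked in the paper's proof of the main theorem, so this gap has no downstream effect.
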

The proof of the lemma can be found in \cite[Theorem 3.2]{Simon:1983}.

Grothendieck \cite[Lemme 12]{Grothendieck:1955} observed a signifiant 
consequence of a  paper by Schwartz and Dieudonn\`e, that gives a convenient
characterization of compact subsets in a Banach space. That is indeed our key
point to reduce  the Plateau's problem in a infinite dimensional to euclidean
spaces.
\begin{lemma}\label{le:Gr}
	Let $W$ be a Banach space, $K\subseteq W$ be a closed set. Then $K$ is
	compact if and only if there exists a sequence $\{y_n\}_{n=1}^{\infty}
	\subseteq W$ with  $\|y_n\|\to 0$ such that $K$ is contained in the closed
	convex hall of
	$Y=\{y_n: n\geq 1\}$. 
\end{lemma}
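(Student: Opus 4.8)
The plan is to prove the two implications separately. I start with sufficiency: suppose $K$ is closed and $K\subseteq\overline{\operatorname{conv}}(Y)$ with $Y=\{y_n\}$ and $\|y_n\|\to0$. The closure of $Y$ equals $Y\cup\{0\}$, a convergent sequence together with its limit, hence compact, and $\overline{\operatorname{conv}}(Y)=\overline{\operatorname{conv}}(Y\cup\{0\})$. I then invoke the classical fact that in a Banach space the closed convex hull of a compact set is compact: the convex hull of a totally bounded set $S$ is totally bounded, for if $\{x_1,\dots,x_N\}$ is a finite $\varepsilon$-net of $S$ then every convex combination of points of $S$ lies within $\varepsilon$ of the compact polytope $\operatorname{conv}\{x_1,\dots,x_N\}$; completeness then upgrades ``totally bounded'' to ``relatively compact.'' Hence $\overline{\operatorname{conv}}(Y)$ is compact, and $K$, being a closed subset of it, is compact.

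For necessity, assume $K$ is compact, the case $K=\emptyset$ being trivial, and set $M=\sup_{x\in K}\|x\|<\infty$ and $\varepsilon_n=M4^{-n}$. The heart of the argument is a telescoping net construction yielding compact sets $K_0=K,K_1,K_2,\dots$ and finite sets $E_n\subseteq K_n$: let $E_0\subseteq K_0$ be a finite $\varepsilon_0$-net of $K_0$; having chosen $K_n$ compact and a finite $\varepsilon_n$-net $E_n\subseteq K_n$, choose for each $y\in K_n$ an element $g^n(y)\in E_n$ with $\|y-g^n(y)\|\le\varepsilon_n$ and let $K_{n+1}$ be the closure of $\{\,y-g^n(y):y\in K_n\,\}$; then $K_{n+1}\subseteq K_n-E_n$ is compact and contained in $\cball(0,\varepsilon_n)$, so it carries a finite $\varepsilon_{n+1}$-net $E_{n+1}\subseteq K_{n+1}$, each of whose elements has norm $\le\varepsilon_n$. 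For fixed $x\in K$, putting $x_0=x$ and $x_{n+1}=x_n-g^n(x_n)\in K_{n+1}$, the telescoping identity $x=\sum_{n=0}^{N}g^n(x_n)+x_{N+1}$ and the bound $\|x_{N+1}\|\le\varepsilon_N\to0$ give a convergent expansion $x=\sum_{n\ge0}g^n(x_n)$ with $\|g^0(x_0)\|\le M$ and $\|g^n(x_n)\|\le\varepsilon_{n-1}=4M\cdot4^{-n}$ for $n\ge1$.

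To finish, I would rescale with the weights $\lambda_n=2^{-n-1}$, which sum to $1$, and put
\[
Y:=\{0\}\cup\bigcup_{n\ge0}\{\,2^{n+1}e:e\in E_n\,\}.
\]
Since $E_n\subseteq\cball(0,\varepsilon_{n-1})$ for $n\ge1$, one has $\|2^{n+1}e\|\le 8M\cdot2^{-n}$ whenever $e\in E_n$, so $Y$ --- which, apart from the finitely many members of $2E_0$, consists of vectors of arbitrarily small norm --- can be enumerated as a sequence whose norms tend to $0$. For $x\in K$ with expansion as above, each truncation $\sum_{n=0}^{N}\lambda_n\big(2^{n+1}g^n(x_n)\big)$ equals the genuine convex combination $\sum_{n=0}^{N}\lambda_n\big(2^{n+1}g^n(x_n)\big)+2^{-N-1}\cdot0$ of elements of $Y$; letting $N\to\infty$ and using $\lambda_n 2^{n+1}=1$ gives $x=\sum_{n\ge0}\lambda_n\big(2^{n+1}g^n(x_n)\big)\in\overline{\operatorname{conv}}(Y)$. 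As $x\in K$ was arbitrary, $K\subseteq\overline{\operatorname{conv}}(Y)$.

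I expect the main obstacle to be the bookkeeping in the necessity half: one must check that the residual sets $K_n$ stay compact --- precisely where the compactness of each $K_n-E_n$ is used, which is what keeps the finite nets $E_n$ available --- and one must choose the net radii to decay geometrically fast enough that, after multiplication by $2^{n+1}$, the rescaled vectors still form a null sequence. A small but essential point is to adjoin $0$ to $Y$, so that the truncations of $\sum_{n}\lambda_n\big(2^{n+1}g^n(x_n)\big)$ are honest convex combinations and not merely sub-convex ones. None of these steps is deep, but a careless choice of the constants $\varepsilon_n$ or $\lambda_n$ would break the argument.
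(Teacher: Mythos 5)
Your proof is correct. Note that the paper itself does not prove this lemma at all --- it simply cites Grothendieck's \emph{Lemme 12}, so any complete argument you give is necessarily ``a different route.'' What you supply is the standard successive-approximation proof, and both halves check out. For sufficiency, reducing to Mazur's theorem (the closed convex hull of the compact set $Y\cup\{0\}$ is compact, via the totally-bounded-hull argument) is exactly right. For necessity, the bookkeeping is sound: $K_n-E_n$ is a finite union of translates of the compact set $K_n$, hence compact, so each $K_{n+1}$ is compact and the finite nets $E_{n+1}$ exist; the telescoping identity $x=\sum_{n=0}^N g^n(x_n)+x_{N+1}$ with $\|x_{N+1}\|\le\varepsilon_N\to 0$ gives the absolutely convergent expansion; and the numerics work, since $\|2^{n+1}e\|\le 2^{n+1}\varepsilon_{n-1}=8M\cdot 2^{-n}$ for $e\in E_n$, $n\ge 1$, so the rescaled net points form a null sequence, while adjoining $0$ to $Y$ (with weight $2^{-N-1}$ in the $N$-th truncation) turns each partial sum into an honest convex combination. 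The choice $\varepsilon_n=M4^{-n}$ against weights $\lambda_n=2^{-n-1}$ is precisely the margin needed for the rescaled vectors to still tend to $0$, which you correctly identify as the delicate point. The only cosmetic remark is that the degenerate cases $K=\emptyset$ and $M=0$ deserve the one-line dismissal you give the first of them; otherwise the argument is complete and self-contained, which is arguably an improvement on the paper's bare citation.
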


Let $H$ be a Hilbert space, $V$ be a subspace of $H$, we denote by $V_{\sharp}$
the orthogonal projection from $H$ onto $V$, then $\Lip(V_{\sharp})=1$, thus
$\HM^d(V_{\sharp}(E))\leq \HM^d(E)$ for any $E\subseteq H$. We put $V_{\sharp}
^{\perp}= \id_{H} - V_{\sharp}$. For any set $E\subseteq H$ and $x\in E$, a
$d$-plane $T$ is called an approximate tangent plane of $E$ at $x$, if 
\[
	\lim_{r\to 0+}\frac{\HM^d(E\cap \cball(x,r)\setminus \mathcal{C}(T,x,
	\delta))}{\omega_d r^d}=0,\ \forall \delta>0,
\]
where $\mathcal{C}(T,x,\delta)=\big\{z\in H:\|T_{\sharp}^{\perp}(z-x)\|\leq \delta
\|z-x\|\big\}$. It is easy to see that if the approximate tangent planes exits,
then it is unique, we denote it by $\Tan^d(E,x)$. Indeed, if $E$ is
$d$-rectifiable, then $\Tan^d(E,x)$ exist for $\HM^d$-a.e. $x\in E$.

For any $d$-rectifiable set $E\subseteq \mathbb{R}^n$, and Lipschitz mapping 
$f:E\to \mathbb{R}^m$, we denote by $\ap Df$ and $\ap J_d f$ the
$d$-approximately differential and $d$-approximately jacobian of $f$, see 
Theorem 3.2.19, Corollary 3.2.20 and Theorem 3.2.22 in \cite{Federer:1969} for
details.

\section{Existence of minimizers}
In this section, we let $B\subseteq H$ be a fixed compact set, $G$ be a fixed
abelian group. Let $L$ be a subgroup of $\cech(B;G)$. By Lemma \ref{le:Gr},
there is a sequence $\{y_n\}_{n=1}^{\infty}\subseteq H$ such that
$\|y_n\|\to 0$ and $B$ is contained in the closed convex hall of $Y=\{y_n:n\geq
1\}$. Let $H'$ be the closure of the subspace which generated by $Y$. Then
$H'$ is separable. Let $V_n$ be the subspace spanned by $\{y_1,\cdots,y_n\}$, 
$\pi_n= (V_n)_{\sharp}$ be the orthogonal projection from $H$ to $V_n$. Put
$B_n=\pi_n(B)$ and $L_n= \check{H}(\pi_n)(L)$. Then $B_n$ is a compact subset
in $V_n$. We denote by $C$ the closed convex hall of $Y$, and put
$\delta_n=\sup\{\dist(x,V_n):x\in C\}$. Then $C\subseteq H'$, $C$ is compact,
$\{\mu\in \mathscr{R}(H):\spt \mu \subseteq C, \mu(H)\leq \alpha\}$ is weakly
compact for any $\alpha>0$, and $d_{\HM}(B_n,B) \leq \delta_n\to 0$. 
\begin{theorem}
	Let $V\subseteq H$ be a finite dimensional subspace, $G$ be an abelian
	group. Then for any compact set $K\subseteq V$ and subgroup $L\subseteq
	\cech(K;G)$, both $\cc_d(V,K,L)$ and $\cc_d(H,K,L)$ have minimizers.
\end{theorem}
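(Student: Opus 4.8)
The plan is to reduce the whole statement to the finite--dimensional Reifenberg problem by means of the orthogonal projection $\pi:=V_{\sharp}\colon H\to V$. The point is that $\pi$ is $1$--Lipschitz and restricts to the identity on $V$, hence on $K$, so it trades the ambient Hilbert space for the finite--dimensional $V$ at no cost in $\HM^{d}$ measure. Observe first that $\cc_d(V,K,L)\subseteq\cc_d(H,K,L)$, and that for $E\in\cc_d(V,K,L)$ the number $\HM^{d}(E\setminus K)$ is the same whether $E$ is regarded in $V$ or in $H$: the inclusion $i_{K,E}\colon K\to E$ and the induced homomorphism $\cech_{d-1}(i_{K,E})$ do not involve the ambient space, and $\HM^{d}$ is a metric notion. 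Hence it suffices to prove (a) that $\cc_d(V,K,L)$ admits a minimizer $E_{0}$, and (b) that
\[
m_{V}:=\inf\{\HM^{d}(X\setminus K):X\in\cc_d(V,K,L)\}=\inf\{\HM^{d}(X\setminus K):X\in\cc_d(H,K,L)\}=:m_{H}.
\]
Granting these, $E_{0}\in\cc_d(H,K,L)$ with $\HM^{d}(E_{0}\setminus K)=m_{V}=m_{H}$, so $E_{0}$ is a minimizer for both classes.

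For (b) the only nontrivial inequality is $m_{H}\ge m_{V}$. Let $E\in\cc_d(H,K,L)$; then $\pi(E)$ is compact, is contained in $V$, and contains $K=\pi(K)$, and the composite $\pi|_{E}\circ i_{K,E}\colon K\to\pi(E)$ equals $i_{K,\pi(E)}$ because $\pi$ fixes every point of $K$. Functoriality of \v{C}ech homology then gives
\[
\cech_{d-1}(i_{K,\pi(E)})=\cech_{d-1}(\pi|_{E})\circ\cech_{d-1}(i_{K,E}),
\]
so $\cech_{d-1}(i_{K,\pi(E)})(L)=\cech_{d-1}(\pi|_{E})\big(\cech_{d-1}(i_{K,E})(L)\big)=0$, i.e. $\pi(E)\in\cc_d(V,K,L)$. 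Since $\pi(E)=K\cup\pi(E\setminus K)$ we have $\pi(E)\setminus K\subseteq\pi(E\setminus K)$, and as $\Lip(\pi)=1$,
\[
\HM^{d}(\pi(E)\setminus K)\le\HM^{d}\big(\pi(E\setminus K)\big)\le\HM^{d}(E\setminus K).
\]
Taking the infimum over $E\in\cc_d(H,K,L)$ gives $m_{V}\le m_{H}$, while $m_{H}\le m_{V}$ is immediate from $\cc_d(V,K,L)\subseteq\cc_d(H,K,L)$; this proves (b).

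For (a) I would fix a linear isometry $\phi$ of $V$ onto $\mathbb{R}^{n}$, $n=\dim V$. Being a homeomorphism, $\phi$ induces an isomorphism $\cech_{d-1}(K;G)\to\cech_{d-1}(\phi(K);G)$ carrying $L$ to a subgroup $L'$, and $E\mapsto\phi(E)$ identifies $\cc_d(V,K,L)$ with $\cc_d(\mathbb{R}^{n},\phi(K),L')$ compatibly with $\HD$ and with the functionals $X\mapsto\HM^{d}(X\setminus K)$, $X\mapsto\HM^{d}(X\setminus\phi(K))$. Existence of minimizers for $\cc_d(\mathbb{R}^{n},\phi(K),L')$, for an arbitrary abelian coefficient group, is known; see \cite{Pauw:2009,Fang:2013,Fang:2020p}. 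Transporting such a minimizer back through $\phi^{-1}$ yields $E_{0}$, which by (b) is also a minimizer for $\cc_d(H,K,L)$.

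The substantive difficulty is thus entirely in step (a), which here is invoked as a black box; the whole content of the present theorem is the homological identity of step (b). Were one to reprove (a), the two delicate points would be these. First, $\HM^{d}$ is \emph{not} lower semicontinuous under Hausdorff convergence of compact sets, so along a minimizing sequence $E_{k}$ --- which one may assume to lie in a fixed closed ball containing $K$, by composing with the $1$--Lipschitz nearest--point retraction onto that ball, a step that neither increases $\HM^{d}(\cdot\setminus K)$ nor destroys the spanning condition, exactly as in (b) --- one argues with a weak limit $\mu$ of the measures $\HM^{d}\mr(E_{k}\setminus K)$ rather than with $\HM^{d}\mr E_{k}$: one bounds $\mu(\mathbb{R}^{n})\le m$, proves a lower density estimate $\Theta^{d}_{\mu}\ge1$ at $\HM^{d}$--a.e.\ point of the Hausdorff limit $E$, and, using that $E\setminus K$ is rectifiable (itself forced by the spanning condition), deduces $\HM^{d}\mr(E\setminus K)\le\mu$ from the density comparison lemma recorded above. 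Second, one must check that the algebraic boundary persists in the limit, i.e.\ $\cech_{d-1}(i_{K,E})(L)=0$ for the Hausdorff limit $E$; this is the step for which Reifenberg required $G$ to be compact, and whose treatment for general abelian $G$ is one of the achievements of \cite{Pauw:2009,Fang:2013}. Neither difficulty reappears in the present theorem, because $\pi$ performs the passage to finite dimensions for free.
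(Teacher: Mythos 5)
Your proposal is correct and follows essentially the same route as the paper: invoke the known finite-dimensional existence result for $\cc_d(V,K,L)$ and use the $1$-Lipschitz orthogonal projection $V_{\sharp}$ to show the two infima agree, so a minimizer in $V$ is also one in $H$. You merely supply details the paper leaves implicit (the functoriality argument showing $V_{\sharp}(E)\in\cc_d(V,K,L)$ and the identification of $V$ with $\mathbb{R}^{n}$), and you are in fact slightly more careful than the paper in writing $\HM^{d}(\cdot\setminus K)$ rather than $\HM^{d}(\cdot)$ in the infima.
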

\begin{proof}
	The proof for existence of minimizers in $\cc_d(V,K,L)$ can be found in
	\cite[Theorem 1.2]{Fang:2013} or \cite[Corollary 1.3]{Fang:2020p}. 

	Since $K\subseteq V\subseteq H$, we get that $\cc_d(V,K,L)\subseteq \cc_d(H,K,L)$
	and for any $E\in \cc_d(H,K,L)$, $V_{\sharp}(E)\in \cc_d(V,K,L)$. Since
	$\HM^d(V_{\sharp}(E))\leq \HM^d(E)$, we get that 
	\[
		\inf\{\HM^d(F):F\in \cc_d(V,K,L)\}\leq \inf\{\HM^d(F):F\in \cc_d(H,K,L)\}
		\leq \inf\{\HM^d(F):F\in \cc_d(V,K,L)\}.
	\]
	Thus $\inf\{\HM^d(F):F\in \cc_d(V,K,L)\}=\inf\{\HM^d(F):F\in \cc_d(H,K,L)\}$, and
	any minimizer for $\cc_d(V,K,L)$ is also a minimizer for $\cc_d(H,K,L)$.
\end{proof}

\begin{lemma}
	\label{prop:de}
	Let $U\subseteq \mathbb{R}^n$ be an open set, $E\subseteq U$ be a relatively
	closed $d$-rectifiable set such that $\HM^d\mr E$ is locally finite. Suppose 
	that there exist two non-decreasing functions  $M:(0,\infty)\to [1,\infty]$
	and $\varepsilon:(0,\infty)\to [0,\infty]$ such that for any $\cball(x,r)
	\subseteq U$ and Lipschitz mapping $\varphi:U\to U$ with $\spt(\varphi) 
	\subseteq \cball(x,r)$,
	\[
		\HM^d(E\cap \cball(x,r))\leq M(r)\HM^d(\varphi(E\cap \cball(x,r)))+
		\varepsilon(r). 
	\]
	Then for any $x\in U$, by setting $v(r)=\HM^d(E\cap \cball(x,r))$ and
	$r_x=\dist(x,\mathbb{R}^n\setminus U)$, we have that 
	\begin{equation}\label{eq:de1}
		v(r)\leq \frac{rM(r)v'(r)}{d} + \varepsilon(r),
	\end{equation}
	for $\HM^1$-a.e. $r\in (0,r_x)$. In particular, if $M\equiv 1$ and
	$\varepsilon\equiv 0$, i.e. $E$ is minimal in $U$, then $r^{-d}v(r)$ is a
	non-decreasing function for $r\in (0,r_x)$.
\end{lemma}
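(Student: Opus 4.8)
The plan is to test the hypothesis of the lemma, in each ball $\cball(x,r)\subseteq U$, against the one-parameter family of Lipschitz maps $f_t\colon U\to U$, $t\in(0,1)$, that crush the concentric ball $\cball(x,tr)$ to the center $x$ and stretch the surrounding annulus radially onto all of $\cball(x,r)$. A Jacobian estimate turns the hypothesis into an integral inequality whose right side is $M(r)\int_{[tr,r]}(\cdots)\,\ud v+\varepsilon(r)$, letting $t\to1$ produces \eqref{eq:de1}, and the monotonicity assertion then follows by integration by parts.

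Fix $x\in U$ and set $v(\rho)=\HM^d(E\cap\cball(x,\rho))$. Since $\HM^d\mr E$ is locally finite and $\cball(x,r)\subseteq U$ is compact for $r<r_x$, $v(\rho)<\infty$ for $\rho<r_x$, and $v$ is non-decreasing, hence differentiable for $\HM^1$-a.e.\ $\rho$. I will use two elementary facts: the image of $\HM^d\mr E$ under $y\mapsto|y-x|$ is the Stieltjes measure $\ud v$, so $\int_E\phi(|y-x|)\,\ud\HM^d=\int\phi\,\ud v$ for Borel $\phi\ge0$; and $\HM^d(E\cap\partial\cball(x,\rho))=0$ for all but countably many $\rho$ (the spheres being disjoint and $\HM^d\mr E$ locally finite). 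For $t\in(0,1)$ define $f_t(y)=y$ when $|y-x|\ge r$, $f_t(y)=x$ when $|y-x|\le tr$, and $f_t(y)=x+\frac{|y-x|-tr}{1-t}\cdot\frac{y-x}{|y-x|}$ when $tr\le|y-x|\le r$. Then $f_t$ is Lipschitz, $\spt f_t\subseteq\cball(x,r)$, and $f_t(U)\subseteq U$ (using $\cball(x,r)\subseteq U$); away from $x$ and the spheres $|y-x|\in\{tr,r\}$ it is differentiable, and in an orthonormal frame whose first vector is radial the singular values of $Df_t(y)$ are $\tfrac1{1-t}$ and, with multiplicity $n-1$, $\tfrac{|y-x|-tr}{(1-t)|y-x|}$; both are $\le\tfrac1{1-t}$ on the annulus. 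Since the Jacobian of the restriction of a linear map to a $d$-plane is at most the product of its $d$ largest singular values, $\ap J(f_t\mr E)(y)\le h_t(|y-x|)$ for $\HM^d\mr E$-a.e.\ $y$, where $h_t(\rho)=\tfrac1{1-t}\big(\tfrac{\rho-tr}{(1-t)\rho}\big)^{d-1}$. Because $f_t(E\cap\cball(x,tr))\subseteq\{x\}$ is $\HM^d$-null and $\HM^d(E\cap\partial\cball(x,r))=0$ for a.e.\ $r$, the area formula gives $\HM^d(f_t(E\cap\cball(x,r)))\le\int_{[tr,r]}h_t(\rho)\,\ud v(\rho)$; inserting $\varphi=f_t$ into the hypothesis yields, for all $t\in(0,1)$,
\[
v(r)\le M(r)\int_{[tr,r]}h_t(\rho)\,\ud v(\rho)+\varepsilon(r).
\]

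The crux, and the step I expect to require the most care, is the passage $t\to1$. Decompose $\ud v=v'\,\ud\rho+\ud\mu_s$ with $\mu_s\ge0$ singular with respect to $\HM^1$. The change of variable $\rho=tr+(1-t)r\xi$ gives $\int_{[tr,r]}h_t(\rho)\,\ud\rho=r\int_0^1\big(\tfrac{\xi}{t+(1-t)\xi}\big)^{d-1}\ud\xi$, whose integrand is bounded by $1$, so it converges to $r/d$ as $t\to1$. Using in addition $h_t\le\tfrac1{1-t}$ on $[tr,r]$, one checks that at every $r\in(0,r_x)$ that is simultaneously a Lebesgue point of $v'$ with $v'(r)$ finite and a point of vanishing upper density for $\mu_s$ --- hence for $\HM^1$-a.e.\ $r$ --- one has $\int_{[tr,r]}h_t\,\ud\mu_s\le\tfrac1{1-t}\mu_s([tr,r])\to0$ and $\int_{[tr,r]}h_t\,v'\,\ud\rho\to\tfrac rd\,v'(r)$. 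Passing to the limit in the displayed inequality yields \eqref{eq:de1} (trivially so at $r$ where $M(r)$ or $\varepsilon(r)$ is infinite).

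Finally, if $M\equiv1$ and $\varepsilon\equiv0$, then \eqref{eq:de1} reads $d\,v(\rho)\le\rho\,v'(\rho)$ for a.e.\ $\rho\in(0,r_x)$. For $0<\rho_1<\rho_2<r_x$, Riemann--Stieltjes integration by parts (legitimate because $\rho\mapsto\rho^{-d}$ is $C^1$) gives
\[
\rho_2^{-d}v(\rho_2)-\rho_1^{-d}v(\rho_1)=\int_{\rho_1}^{\rho_2}\rho^{-d-1}\big(\rho\,v'(\rho)-d\,v(\rho)\big)\,\ud\rho+\int_{(\rho_1,\rho_2]}\rho^{-d}\,\ud\mu_s(\rho),
\]
and both terms on the right are nonnegative (the first by the a.e.\ inequality just noted, the second since $\mu_s\ge0$ and $\rho^{-d}>0$). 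Hence $\rho\mapsto\rho^{-d}v(\rho)$ is non-decreasing on $(0,r_x)$.
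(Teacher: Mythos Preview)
Your proof is correct and follows the same overall strategy as the paper --- test the hypothesis against a one-parameter family of radial ``crushing'' maps and pass to the limit --- but the execution differs in two noteworthy ways. First, your comparison map $f_t$ sends the sphere of radius $s$ to the sphere of radius $(s-tr)/(1-t)$, whereas the paper's $\varphi_\delta$ (with $\delta=1-t$) sends it to radius $\eta_\delta(s/r)\cdot s$; both collapse $\cball(x,tr)$ to $x$ and fix $\partial\cball(x,r)$, but yours has the cleaner singular-value structure $\bigl(\tfrac{1}{1-t},\tfrac{s-tr}{(1-t)s},\dots\bigr)$, which lets you bound the tangential Jacobian by the product of the $d$ largest singular values and avoid the paper's explicit wedge-product computation involving the angle $\theta(z)$ between $z-x$ and $\Tan^d(E,z)$. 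Second, and more substantively, the paper passes through the coarea formula and the slice function $f(s)=\HM^{d-1}(E\cap\partial\cball(x,s))$, obtaining $v(r)\le \tfrac{rM(r)}{d}f(r)+\varepsilon(r)$ and then separately arguing $f(r)\le v'(r)$; you instead push $\HM^d\mr E$ forward to the Stieltjes measure $\ud v$, split $\ud v=v'\,\ud\rho+\ud\mu_s$, and handle the limit $t\to1$ by a Lebesgue-point argument for $v'$ together with zero density of $\mu_s$. Your route is more direct and, in the monotonicity clause, strictly more rigorous: the paper concludes from $(r^{-d}v(r))'\ge 0$ a.e.\ that $r^{-d}v(r)$ is non-decreasing, which is insufficient for a function that is merely monotone (the singular part of $\ud v$ could a priori cause jumps), whereas your integration-by-parts identity correctly isolates the nonnegative contribution $\int\rho^{-d}\,\ud\mu_s$.
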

\begin{proof}
	For any $0<\delta<1$, let $\eta$ be the function defined by 
	\[
		\eta_{\delta}(t)=\begin{cases}
			0,&t\leq 1-\delta,\\
			\frac{t-(1-\delta)}{\delta},& 1-\delta<t<1,\\
			1,&t\geq 1.
		\end{cases}
	\]
	For any $\cball(x,r)\subseteq U$, let $\varphi_{\delta}:U\to U$ be the 
	Lipschitz mapping defined by 
	\[
		\varphi_{\delta}(z)= x+\eta_{\delta}(\|z-x\|/r)(z-x).
	\]
	Let $\psi:U\to \mathbb{R}$ be the function given by $\psi(z)=\|z-x\|$. Then
	for any vector $v\in \mathbb{R}^n$, 
	\[
		D\psi(z)v= \left\langle \frac{z-x}{\|z-x\|},v \right\rangle.
	\]

	For any $z\in E\cap \oball(x,r)\setminus \cball(x,(1-\delta)r)$, if $\Tan^d(E,z)$
	exists, then for any $v\in \Tan^d(E,z)$,
	\[
		\ap D \left(\varphi_{\delta}\vert_E\right)(z) v = \eta_{\delta}
		\left(\frac{\|z-x\|}{r}\right)v 
		+ \eta_{\delta}'\left(\frac{\|z-x\|}{r}\right)\frac{1}{r}\frac{\langle z-x,v \rangle}{\|z-x\|}
		(z-x)
	\]
	We put $T=\Tan^d(E,z)$, $w_1=T_{\sharp}(z-x)$ and $w_2= (z-x)-T_{\sharp}(z-x)
	$. Let $v_1,\cdots,v_d$ be an orthonormal basis of $T$ such that $w_1$ is
	parallel to $v_1$. Then $w_2$ is perpendicular to $T$. Thus
	\[
		\ap D \left(\varphi_{\delta}\vert_E\right)(z) v_1 = \eta_{\delta}
		\left(\frac{\|z-x\|}{r}\right)v_1 + \frac{1}{\delta r}\frac{\langle w_1,
		v_1\rangle}{\|z-x\|}(w_1+w_2), 
	\]
	and for $2\leq i\leq d$,
	\[
		\ap D \left(\varphi_{\delta}\vert_E\right)(z) v_i = \eta_{\delta}
		\left(\frac{\|z-x\|}{r}\right)v_i.
	\]
	Suppose that $w_1 = t v_1$, $\beta = \|w_2\|$, $a=\eta_{\delta}
	(\|z-x\|/r)$ and $b=t/(\|z-x\|\delta r)$, then  
	\[
		\begin{aligned}
			\ap J_d(\varphi_{\delta}\vert_E)(z)&= \|\wedge_d \ap D
			\left(\varphi_{\delta}\vert_E\right)(z) \| = \|\ap D \left(\varphi_{\delta}
			\vert_E\right)(z) v_1 \wedge \cdots \wedge \ap D \left(\varphi_{\delta}
			\vert_E\right)(z) v_d\|\\
			&=\left\|(a + t b)a^{d-1}v_1\wedge\cdots\wedge
			v_d+ b a^{d-1} w_2\wedge v_2\wedge \cdots \wedge v_d\right\|\\
			& = a^{d-1}\left((a+tb)^2+(b\beta)^2\right)^{1/2}.
		\end{aligned}
	\]
	Since $t^2+\beta^2=\|z-x\|^2$ and $tb = t^2/(\|z-x\|\delta r)\leq
	|t|/(\delta r)$, we get that 
	\[
		\ap J_d(\varphi_{\delta}\vert_E)(z)= a^{d-1}\left(a^2+2atb+(t/\delta r)
		^2\right)^2\leq a^{d-1}\left(a+|t|/(\delta r)\right)\leq 1+ a^{d-1}
		|t|/(\delta r).
	\]
	Denote by $\theta(z)\in [0,\pi/2]$ the angle between $z-x$ and $T$. We have
	that 
	\[
		\ap J_d(\varphi_{\delta}\vert_E)(z)\leq 1+ \left(\eta_{\delta}
		\left(\frac{\|z-x\|}{r}\right)\right)^{d-1}\frac{\|z-x\|\cos \theta(z)}
		{\delta r}
	\]
	and $\ap J_d(\psi\vert_E)(z)=\cos \theta(z)$.
		Hence, by setting $E_{r,\delta}= E\cap
	\oball(x,r)\setminus \cball(x,(1-\delta)r)$ and $f(s) = \HM^{d-1}(E\cap
	\partial \cball(x,s))$, we have that 
	\[
		\begin{aligned}
			\HM^d(\varphi_{\delta}(E\cap \oball(x,r)))&=\HM^d(\varphi_{\delta}(E\cap
			\oball(x,r)\setminus \cball(x,(1-\delta)r)))\\
			&\leq \int_{z\in E_{r,\delta}}\ap J_d(\varphi_{\delta}\vert_E)(z) \ud
			\HM^d(z)\\
			&\leq \HM^d(E_{r,\delta}) + \int_{z\in E_{r,\delta}}\left(\eta_{\delta}
			\left(\frac{\|z-x\|}{r}\right)\right)^{d-1}\frac{\|z-x\|\cos \theta(z)}
			{\delta r}\ud \HM^d(z)\\
			&=\HM^d(E_{r,\delta}) +  \frac{1}{\delta r}\int_{(1-\delta)r}^{r}
			s\cdot \left(\eta_{\delta}(s/r) \right)^{d-1}  \HM^{d-1} (E\cap \partial
			\cball(x,s)) \ud s\\
			&=\HM^d(E_{r,\delta}) +  \frac{r}{\delta }\int_{(1-\delta)}^{1}
			s\cdot \left(\eta_{\delta}(s) \right)^{d-1} f(rs) \ud s.
		\end{aligned}
	\]

	Since $v(r)$ is a non-decreasing function, we get that $v'(r)$ exists for 
	$\HM^1$-a.e. $r\in (0,r_x)$. Since $\HM^d\mr E$ is locally finite, we get 
	that $\HM^d(E\cap \partial\cball(x,r))=0$ for $\HM^1$-a.e. $r\in (0,r_x)$.
	Since $f(s)$ is a measurable function and Lebesgue integrable, we get that
	for $\HM^1$-a.e. $s\in (0,r_x)$, $s$ is a Lebesgue point of $f$. Let
	$\mathcal{R}_x$ be the set of points $r\in (0,r_x)$ such that $v'(r)$ exists,
	$\HM^d(E\cap \partial \cball(x,r))=0$ and $r$ is a Lebesgue point of $f$.
	Then $\HM^1((0,r_x)\setminus R_x)=0$.

	Suppose $r\in \mathcal{R}_x$. Then $\HM^d(E\cap \partial \cball(x,r))=0$,
	thus $\HM^d(E_{r,\delta})\to 0$ as $\delta \to 0+$. Since 
	\[
		\frac{1}{\delta}\int_{(1-\delta)}^{1}(\eta_{\delta}(s))^{d-1}ds =
		\frac{1}{d} 
	\]
	and 
	\[
		\lim_{\delta\to 0+}\frac{1}{\delta} \int_{1-\delta}^{1} |f(rs)-f(r)|\ud s
		=0,
	\]
	we get that
	\[
		\limsup_{\delta\to 0+}\HM^d(\varphi_{\delta}(E\cap \cball(x,r)))\leq \frac{r}
		{d}f(r),
	\]
	thus
	\[
		\begin{aligned}
			v(r)&=\HM^d(E\cap \cball(x,r))\leq \limsup_{\delta\to 0+}
			\Big(\HM^d(E_{r,\delta})+M(r)\HM^d(\varphi_{\delta}
			(E\cap \cball(x,r))) + \varepsilon(r)\Big) \\
			&\leq \frac{rM(r)f(r)}{d} +
			\varepsilon(r).
		\end{aligned}
	\]
	Since 
	\[
		v(r)-v((1-\delta)r)= \HM^d(E_{r,\delta})\geq \int_{z\in E_{r,\delta}}\cos
		\theta(z) \ud \HM^d(z) = \int_{(1-\delta)r}^{r} f(s) \ud s,
	\]
	we get that 
	\[
		v'(r) = \lim_{\delta\to 0}\frac{v(r)-v((1-\delta)r)}{\delta}\geq f(r).
	\]
	Hence
	\[
		v(r)\leq \frac{rM(r)v'(r)}{d} + \varepsilon(r)
	\]
	for any $r\in \mathcal{R}_x$, thus \eqref{eq:de1} holds for $\HM^1$-a.e. 
	$r\in (0,r_x)$. If $M\equiv 1$ and $\varepsilon\equiv 1$,
	then $v(r)\leq d^{-1}rv'(r)$ for $\HM^1$-a.e. $r\in (0,r_x)$. Thus 
	\[
		\left(r^{-d}v(r)\right)' = r^{-d-1}(rv'(r)-d\cdot v(r))\geq 0.
	\]
	And $r^{-d}v(r)$ is a non-decreasing function for $r\in (0,r_x)$.

\end{proof}

In the rest of the section, we only consider the case $\inf\{\HM^d(E\setminus B):E\in
\cc_d(H,B,L)\}<\infty$, thus we also have $\inf\{\HM^d(E\setminus B):E\in
\cc_d(V_n,B_n,L_n)\}<\infty$.
\begin{lemma}\label{le:md}
	Let $X_n$ be a minimizer for $\cc_d(V_n,B_n,L_n)$. For any subsequence 
	$\{X_{n_k}\}$ of $\{X_n\}$, if $\HM^d\mr (X_{n_k}\setminus B_{n_k})\wc \mu$,
	then for any $z\in H \setminus B$, $\Theta_{\mu}^d(z,\cdot)$ is
	non-decreasing and $\Theta_{\mu}^d(z)\geq 1$ for every $z\in \spt\mu
	\setminus B$.
\end{lemma}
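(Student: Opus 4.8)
The plan is to obtain both conclusions for $\mu$ by passing to the limit along $\{X_{n_k}\}$, using that each minimizer $X_n$ is a minimal set away from $B_n$. First, a normalization: replacing $X_n$ by its nearest-point projection onto the closed convex hull $K_n$ of $B_n$ — a $1$-Lipschitz self-map of $V_n$ that fixes $B_n$ pointwise, hence carries $X_n$ to another minimizer contained in $K_n$ (the spanning condition survives because the map restricts to $\id$ on $B_n$, and the $\HM^d$-measure outside $B_n$ does not increase) — we may assume $X_n\subseteq K_n\subseteq\pi_n(C)$ (the latter set being compact and convex). Since $\delta_n\to0$ forces $d_{\HM}(\pi_n(C),C)\to0$, the set $K:=\overline{\bigcup_n\pi_n(C)}$ is compact and contains every $X_n$; thus the measures $\mu_n:=\HM^d\mr(X_n\setminus B_n)$ and their weak limit $\mu$ are Radon measures supported in the fixed compact set $K$, and the lower/upper semicontinuity properties of weak convergence on the locally compact space $K$ apply. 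Passing to the subsequence, I write $n$ for $n_k$.

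Next, the local structure of $X_n$: for $\cball(x,r)\subseteq V_n\setminus B_n$ and a Lipschitz $\varphi$ supported in $\cball(x,r)$, extending $\varphi$ by the identity on $V_n\setminus\cball(x,r)$ gives a continuous self-map of $V_n$ fixing $B_n$, whose image of $X_n$ is therefore again in $\cc_d(V_n,B_n,L_n)$; comparing with $X_n$ and localizing yields $\HM^d(X_n\cap\cball(x,r))\le\HM^d(\varphi(X_n\cap\cball(x,r)))$. Hence $X_n\cap(V_n\setminus B_n)$ is a minimal, relatively closed, $d$-rectifiable (by \cite{Fang:2013,Fang:2020p}) set of finite $\HM^d$-measure in the open set $V_n\setminus B_n$, so Lemma~\ref{prop:de} (the case $M\equiv1$, $\varepsilon\equiv0$) shows that $t\mapsto t^{-d}\HM^d(X_n\cap\cball(x,t))$ is non-decreasing on $(0,\dist(x,B_n))$ for every $x\in V_n\setminus B_n$. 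Combined with $\Theta^d_{\mu_n}(p)=1$ for $\HM^d$-a.e.\ $p\in X_n\setminus B_n$ (which holds by rectifiability, cf.\ \cite{Kirchheim:1994}), this gives $\mu_n(\cball(p,t))\ge\omega_d t^d$ for $\HM^d$-a.e.\ $p\in X_n\setminus B_n$ and all $0<t<\dist(p,B_n)$.

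Finally I pass to the limit. For $z\in H\setminus B$, write $z=z'+z''$ with $z'\in H'$, $z''\perp H'$; since $B\cup\spt\mu\subseteq H'$, the Pythagorean theorem gives $\Theta^d_\mu(z,r)=\Theta^d_\mu\!\big(z',\sqrt{r^2-|z''|^2}\,\big)\,(1-|z''|^2/r^2)^{d/2}$ for $r>|z''|$ and $0$ for $r<|z''|$, a product of non-negative non-decreasing functions of $r$, so the monotonicity claim reduces to $z\in H'$, which we assume henceforth, while $z\in\spt\mu$ forces $z''=0$. Put $x_n:=\pi_n(z)$ and $\gamma_n:=|z-x_n|=\dist(z,V_n)\to0$; using $\dist(b,V_n)\le\delta_n$ for $b\in B$ one checks $\dist(x_n,B_n)\to d_z:=\dist(z,B)$, and since $\spt\mu_n\subseteq V_n$ one has $\mu_n(\cball(z,s))=\mu_n(\cball(x_n,\sqrt{s^2-\gamma_n^2}\,))$ for $s\ge\gamma_n$, and likewise for open balls. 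For $0<r_1<r_2<d_z$, $r_1<s<r_2$, and $n$ large (so that $\sqrt{r_2^2-\gamma_n^2}<\dist(x_n,B_n)$), the monotonicity of the density ratio about $x_n$ gives
\[
\mu(\cball(z,r_1))\le\mu(\oball(z,s))\le\liminf_n\mu_n(\cball(z,s))\le\liminf_n\left(\frac{s^2-\gamma_n^2}{r_2^2-\gamma_n^2}\right)^{\!d/2}\!\mu_n(\cball(z,r_2))\le\left(\frac{s}{r_2}\right)^{\!d}\!\mu(\cball(z,r_2)),
\]
where the last step uses $\limsup_n\mu_n(\cball(z,r_2))\le\mu(\cball(z,r_2))$; letting $s\downarrow r_1$ gives $r_1^{-d}\mu(\cball(z,r_1))\le r_2^{-d}\mu(\cball(z,r_2))$, i.e.\ $\Theta^d_\mu(z,\cdot)$ is non-decreasing. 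For $z\in\spt\mu\setminus B$ (so $z\in H'$) and $0<r<d_z$: since $0<\mu(\oball(z,\eta))\le\liminf_n\mu_n(\oball(z,\eta))$ for every $\eta>0$, for large $n$ the set $(X_n\setminus B_n)\cap\oball(z,\eta)$ has positive $\HM^d$-measure, so it contains a point of density one; a diagonal choice produces $q_n\in X_n\setminus B_n$ with $\Theta^d_{\mu_n}(q_n)=1$ and $|q_n-z|\to0$, whence $\dist(q_n,B_n)\to d_z$ and $\alpha_n:=|q_n-x_n|\to0$. Writing $\rho_n:=\sqrt{r^2-\gamma_n^2}\to r$, for large $n$ we have $\cball(q_n,\rho_n-\alpha_n)\subseteq\cball(x_n,\rho_n)$ and $\rho_n-\alpha_n<\dist(q_n,B_n)$, so
\[
\mu(\cball(z,r))\ge\limsup_n\mu_n(\cball(x_n,\rho_n))\ge\limsup_n\mu_n(\cball(q_n,\rho_n-\alpha_n))\ge\limsup_n\omega_d(\rho_n-\alpha_n)^d=\omega_d r^d,
\]
so $\Theta^d_\mu(z,r)\ge1$ for all $r\in(0,d_z)$, and with the monotonicity just proved $\Theta^d_\mu(z)\ge1$. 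The conceptual input — monotonicity and the lower density bound for minimal sets — is supplied by Lemma~\ref{prop:de} and rectifiability. The delicate part is the bookkeeping: making weak convergence usable in the infinite-dimensional ambient (whence the reductions to the compact $K$ and to $z\in H'$), and comparing balls about $z$ with balls about $x_n=\pi_n(z)$ through the Pythagorean identity while keeping every radius below the threshold $\dist(\cdot,B_n)$ at which the estimates for $X_n$ are valid.
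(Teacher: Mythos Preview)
Your proof is correct and follows the same strategy as the paper: minimality of $X_n\setminus B_n$ in $V_n\setminus B_n$ gives the monotonicity formula via Lemma~\ref{prop:de} and density $1$ a.e.\ via rectifiability, and then one passes these to $\mu$ through weak convergence. Your version is more careful on two points the paper leaves implicit --- the convex projection forcing all $X_n$ into a fixed compact set (so that the open/closed-ball semicontinuity for weak convergence is available) and the Pythagorean bookkeeping relating balls about $z$ to balls about $x_n=\pi_n(z)\in V_n$ (so that the monotonicity for $\Theta^d_{n}(x,\cdot)$, established only for $x\in V_n\setminus B_n$, can be invoked) --- but the route is the same.
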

\begin{proof}
	For any $x\in V_n\setminus B_n$ and $0<r<\dist(x,V_n\setminus B_n)$, we put
	$\Theta_n^d(x,r)=\Theta_{\mu_n}^d(x,r)$ and
	$\Theta_n^d(x)=\Theta_{\mu_n}^d(x)$, where $\mu_n=\HM^d\mr (X_n\setminus B_n)$.  
	Since $X_n$ is a minimizer for $\cc_d(V_n,B_n,L_n)$, we get that $X_n\setminus
	B_n$ is minimal in $V_n\setminus B_n$. By Proposition \ref{prop:de}, we get
	that the function $\Theta_n^d(x,\cdot)$ is non-decreasing for every $x\in
	V_n\setminus B_n$. Since $X_n\setminus B_n$ is 
	minimal, thus $X_n\setminus B_n$ is $d$-rectifiable, and for $\HM^d$-a.e. 
	$x\in X_n\setminus B_n$, $\Theta_n^d(x)= 1$.

	For any $z\in \spt\mu \setminus B$, we can find a sequence $\{z_{n_k}\}$ with
	$z_{n_k}\in X_{n_k} $ such that $\|z-z_{n_k}\|\to 0$ and $\Theta_n^d(z_{n_k})
	=1$. Thus, for any $0<r<r_z=\dist(z,H\setminus B)$, setting 
	\[
		r_k=\min\{r-
		\|z-z_{n_k}\|, \dist(z_{n_k},V_{n_k}\setminus B_{n_k})\},
	\]
	we have that 
	$r_k\to r$ and 
	\[
		\mu(\cball(z,r)) \geq \limsup_{k\to \infty}\HM^d(E_{n_k}\cap \cball(z,r))
		\geq \limsup_{k\to \infty}\HM^d(E_{n_k}\cap \cball(z,r_k))\geq
		\limsup_{k\to \infty}\omega_d r_k^d = \omega_d r^d.
	\]
	Hence $\Theta_{\mu}^d(z)\geq 1$ for every $z\in \spt\mu\setminus B$.

	For any $z\in H\setminus B$ and $0<s<t<\dist(z,H\setminus B)$, setting 
	$r_{k,z} = \dist(z,V_n\setminus B_n) $, then $r_{k,z}\to r_z$, thus
	$t<r_{k,z}$ for $k$ large enough.  If $\mu(\partial
	\cball(z,s))=\mu(\cball(z,t))=0$, then 
	\[
		\mu(\cball(z,t)) = \lim_{k\to \infty}\HM^d(E_{n_k}\cap \cball(z,t)) 
	\]
	and 
	\[
		\mu(\cball(z,s)) = \lim_{k\to \infty}\HM^d(E_{n_k}\cap \cball(z,s)). 
	\]
	Hence 
	\[
		\Theta_{\mu}^d(z,t)= \lim_{k\to \infty} \Theta_{n_k}^d(z,t)\geq \lim_{k\to
		\infty} \Theta_{n_k}^d(z,s) = \Theta_{\mu}^d(z,s).
	\]
	Since $\mu(\partial \cball(z,s))=0$ for $\HM^1$-a.e. $s\in (0,r_z)$, for any
	$0<r<R<\dist(z,H\setminus B)$, we can find $s$ and $t$ with $r<s<t<R$ and
	$\mu(\partial \cball(z,s))=\mu(\cball(z,t))=0$, thus
	\[
		\Theta_{\mu}^d(z,r)\leq \Theta_{\mu}^d(z,s)\leq \Theta_{\mu}^d(z,t)\leq
		\Theta_{\mu}^d(z,R).
	\]
	Hence $\Theta_{\mu}^d(z,\cdot)$ is non-decreasing.
\end{proof}

\begin{lemma}
	Let $X_n$ be a minimizer for $\cc_d(V_n,B_n,L_n)$ such that $\spt(\HM^d\mr
	(X_n\setminus B)) \supseteq X_n\setminus B_n$. For any subsequence 
	$\{X_{n_k}\}$ of $\{X_n\}$, if $\HM^d\mr (X_{n_k}\setminus B_n)\wc \mu$, 
	then $X_{n_k}\to B\cup \spt\mu$ in Hausdorff distance.
\end{lemma}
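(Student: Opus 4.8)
The plan is to establish, separately, the two one‑sided estimates whose conjunction is $d_{\HM}(X_{n_k},B\cup\spt\mu)\to 0$: first that every point of $X_{n_k}$ lies uniformly close to $B\cup\spt\mu$, and second that every point of $B\cup\spt\mu$ lies uniformly close to $X_{n_k}$. Recall that in the setting of this section all the $X_n$ are contained in a fixed compact set $C_0\subseteq H$ (so that, since $\HM^d\mr(X_{n_k}\setminus B_{n_k})\wc\mu$, also $\spt\mu\subseteq C_0$), and hence the semicontinuity properties of weakly convergent Radon measures on the compact space $C_0$ recalled in the Preliminary section apply; write $\nu_k=\HM^d\mr(X_{n_k}\setminus B_{n_k})$, so $\nu_k\wc\mu$. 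Note also that $B$ and all the $B_n$ are compact, so $B\cup\spt\mu$ is compact.

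The first step is a uniform interior density lower bound: if $x\in X_n\setminus B_n$ and $0<r<\min\{\dist(x,B),\dist(x,B_n)\}$, then $\HM^d\!\big((X_n\setminus B_n)\cap\cball(x,r)\big)\ge 2^{-d}\omega_d r^d$. Indeed, since $\cball(x,r)$ meets neither $B$ nor $B_n$, the measures $\HM^d\mr(X_n\setminus B)$ and $\HM^d\mr(X_n\setminus B_n)$ agree on it; by the hypothesis $x\in\spt\!\big(\HM^d\mr(X_n\setminus B)\big)$ the ball $\cball(x,r/2)$ therefore has positive $\HM^d\mr(X_n\setminus B_n)$-measure, and since $X_n\setminus B_n$ is $d$-rectifiable (Lemma \ref{le:md}) it contains a point $x'$ with $\Theta_n^d(x')=1$. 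Because $X_n\setminus B_n$ is minimal in $V_n\setminus B_n$, Lemma \ref{prop:de} makes $r\mapsto\Theta_n^d(x',r)$ non-decreasing on $(0,\dist(x',B_n))$, and $\dist(x',B_n)\ge\dist(x,B_n)-r/2>r/2$; hence $\HM^d\!\big((X_n\setminus B_n)\cap\cball(x',r/2)\big)\ge\omega_d(r/2)^d$, and $\cball(x',r/2)\subseteq\cball(x,r)$ gives the bound. This step is the crux of the proof, and it is exactly where the standing hypothesis on the support of $\HM^d\mr(X_n\setminus B)$ is used: without it $X_n$ could carry lower dimensional pieces invisible to $\HM^d\mr(X_n\setminus B)$, hence to $\mu$, which would sit at positive distance from $B\cup\spt\mu$.

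For the second estimate, $\sup_{y\in B\cup\spt\mu}\dist(y,X_{n_k})\to 0$, the part over $B$ is immediate: $B_{n_k}\subseteq X_{n_k}$ and $d_{\HM}(B_{n_k},B)\to 0$ give $\dist(y,X_{n_k})\le\dist(y,B_{n_k})\le d_{\HM}(B_{n_k},B)$ for $y\in B$. For the part over $\spt\mu$ I argue by contradiction: if it fails there are $\varepsilon>0$ and $y_k\in\spt\mu$ with $\dist(y_k,X_{n_k})\ge\varepsilon$; passing to a subsequence, $y_k\to y\in\spt\mu$ by compactness of $\spt\mu$, so $\dist(y,X_{n_k})>\varepsilon/2$ and thus $\nu_k(\oball(y,\varepsilon/2))=0$ for large $k$; lower semicontinuity of mass on open sets then gives $\mu(\oball(y,\varepsilon/2))\le\liminf_k\nu_k(\oball(y,\varepsilon/2))=0$, contradicting $y\in\spt\mu$.

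Finally I prove $\sup_{x\in X_{n_k}}\dist(x,B\cup\spt\mu)\to 0$, which is the main obstacle. Suppose not: there are $\varepsilon>0$ and $x_k\in X_{n_k}$ with $\dist(x_k,B\cup\spt\mu)\ge\varepsilon$, and, since $x_k\in C_0$ which is compact, after passing to a subsequence $x_k\to z$ with $\dist(z,B\cup\spt\mu)\ge\varepsilon$. For $k$ large we have $\dist(x_k,B)\ge\varepsilon/2$ and $\dist(x_k,B_{n_k})\ge\dist(x_k,B)-d_{\HM}(B_{n_k},B)\ge\varepsilon/2$, so $x_k\in X_{n_k}\setminus B_{n_k}$ and the density lower bound applies with $r=\varepsilon/4$, yielding $\nu_k(\cball(x_k,\varepsilon/4))\ge 2^{-d}\omega_d(\varepsilon/4)^d=:c_1>0$. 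Since $\cball(x_k,\varepsilon/4)\subseteq\cball(z,\varepsilon/2)$ for $k$ large, $\nu_k(\cball(z,\varepsilon/2))\ge c_1$, and upper semicontinuity of mass on compact sets gives $\mu(\cball(z,\varepsilon/2))\ge\limsup_k\nu_k(\cball(z,\varepsilon/2))\ge c_1>0$. But $\dist(z,\spt\mu)\ge\varepsilon>\varepsilon/2$ forces $\cball(z,\varepsilon/2)\cap\spt\mu=\emptyset$, hence $\mu(\cball(z,\varepsilon/2))=0$, a contradiction. Combining the two estimates yields $X_{n_k}\to B\cup\spt\mu$ in Hausdorff distance.
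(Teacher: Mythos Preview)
Your proof is correct. The approach differs in organization from the paper's, though both rest on the same key ingredient (the interior density lower bound coming from monotonicity of $\Theta_n^d(\cdot,r)$, which is where the support hypothesis is consumed).

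The paper argues by subsequence extraction: it passes to an arbitrary Hausdorff-convergent subsequence $Y_m\to Y$, notes $X:=B\cup\spt\mu\subseteq Y$ for standard reasons, and then for $y\in Y\setminus B$ picks $y_m\in Y_m$ with $y_m\to y$ and uses the density bound $\HM^d(Y_m\cap\cball(y_m,r_m))\ge\omega_d r_m^d$ to deduce $\mu(\cball(y,r))\ge\omega_d r^d$, hence $y\in\spt\mu$ and $Y=X$; since every convergent subsequence has the same limit, $X_{n_k}\to X$. Your proof instead establishes the two one-sided Hausdorff estimates directly by contradiction, invoking lower semicontinuity on open sets for $\spt\mu\subseteq$ ``limit'' and upper semicontinuity on compact sets for ``limit'' $\subseteq B\cup\spt\mu$. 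Your route is more explicit and self-contained (you isolate the density bound as a separate lemma, at the harmless cost of a factor $2^{-d}$), while the paper's is terser but leaves more to the reader. Both tacitly use that the $X_{n_k}$ sit in a fixed compact set so that the semicontinuity and Hausdorff-compactness statements from the Preliminaries apply; you make this explicit, which is a small plus.
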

\begin{proof}
	Suppose that $\{X_{n_k}\}$ is a subsequence of $\{X_n\}$ such that $\HM^d\mr
	(X_{n_k}\setminus B_{n_k}) \wc \mu$. We put $X=B\cup \spt \mu$. Let
	$\{Y_m\}$ be a subsequence of $\{X_{n_k}\}$ such that $Y_m$ converges to a
	compact set $Y$ in Hausdorff  distance. Then $X\subseteq Y$. For any $y\in
	Y\setminus B$, and $0<r<r_y=\dist(y,H\setminus B)$, we take $y_m\in Y_m$
	such that $y_m\to y$, and put $r_m= r- \|y-y_m\|$, then 
	\[
		\mu(\cball(y,r))\geq \limsup_{m\to \infty}\HM^d(Y_m\cap \cball(y,r))
		\geq \limsup_{m\to \infty}\HM^d(Y_m\cap \cball(y_m,r_m))\geq \limsup_{m\to
		\infty}\omega_d r_m^{d} = \omega_d r^d.
	\]
	Thus $y\in \spt\mu$, and we get that $Y\subseteq X$. Hence $X_{n_k}\to X$ in
	Hausdorff distance.

\end{proof}
\begin{lemma}
	Let $\{E_n\}$ be a sequence of compact subsets in $H$ with $E_n\in \cc_d(V_n,
	B_n,L_n)$. If $E_n$ converges to $E$ in Hausdorff distance, then $E\in
	\cc_d(H,B,L)$. 
\end{lemma}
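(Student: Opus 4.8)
The plan is to verify the two defining properties: that $E$ is compact with $B\subseteq E$, and that $\cech_{d-1}(i_{B,E})(L)=0$. Compactness of $E$ is free, since a Hausdorff limit of compact sets in the complete space $H$ is compact. For $B\subseteq E$: given $b\in B$ we have $\pi_n(b)\in B_n\subseteq E_n$ with $\|\pi_n(b)-b\|=\dist(b,V_n)\leq\delta_n\to 0$ and $\dist(\pi_n(b),E)\leq d_{\HM}(E_n,E)\to 0$, so $\dist(b,E)=0$ and $b\in E$. The real content is the homological condition, and here is where the infinite-dimensionality bites: unlike in $\mathbb{R}^n$, neighbourhoods of $E$ in $H$ are not compact, so some care is needed to keep everything inside compact sets.

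First I would fix a compact convex ambient set. Since $d_{\HM}(E_n,E)\to 0$, the set $E\cup\bigcup_n E_n$ is totally bounded, so its closure $K_0$ in $H$ is compact; hence so is its closed convex hull $K_1$, because the closed convex hull of a compact subset of a Banach space is compact. Then $B$, $B_n$, $E_n$ and $E$ all lie in $K_1$. For $k\geq 1$ put $K^{(k)}:=\{x\in K_1:\dist(x,E)\leq 1/k\}$; these form a decreasing sequence of compact sets with $\bigcap_k K^{(k)}=E$.

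Next, for each fixed $k$ I would show $\cech_{d-1}(j_k)\big(\cech_{d-1}(i_{B,E})(L)\big)=0$, where $j_k\colon E\hookrightarrow K^{(k)}$ is the inclusion. Pick $n$ with $\delta_n\leq 1/k$ and $d_{\HM}(E_n,E)\leq 1/k$; then $E_n\subseteq K^{(k)}$, so the map $g_n\colon B\to K^{(k)}$ given by $g_n(b)=\pi_n(b)$ is well defined and factors as $B\xrightarrow{\pi_n|_B}B_n\xrightarrow{i_{B_n,E_n}}E_n\hookrightarrow K^{(k)}$. Since $L_n=\cech_{d-1}(\pi_n|_B)(L)$ and $\cech_{d-1}(i_{B_n,E_n})(L_n)=0$ (because $E_n\in\cc_d(V_n,B_n,L_n)$), functoriality gives $\cech_{d-1}(g_n)(L)=0$. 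On the other hand, the straight-line homotopy $(b,t)\mapsto (1-t)b+t\pi_n(b)$ takes values in $K_1$ (convexity) and stays within $\delta_n\leq 1/k$ of $E$, hence lands in $K^{(k)}$; it joins $g_n$ to the inclusion $B\hookrightarrow E\hookrightarrow K^{(k)}$. By homotopy invariance of \v{C}ech homology, $\cech_{d-1}(j_k)\big(\cech_{d-1}(i_{B,E})(L)\big)=\cech_{d-1}(g_n)(L)=0$.

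Finally, since $E=\bigcap_k K^{(k)}$ with the $K^{(k)}$ decreasing and compact, the continuity property of \v{C}ech homology (which holds for decreasing sequences of compacta) makes the natural homomorphism $\cech_{d-1}(E;G)\to\varprojlim_k\cech_{d-1}(K^{(k)};G)$ an isomorphism, in particular injective. As $\cech_{d-1}(i_{B,E})(L)$ maps to $0$ in each $\cech_{d-1}(K^{(k)};G)$, it maps to $0$ in the inverse limit, hence $\cech_{d-1}(i_{B,E})(L)=0$. Together with $B\subseteq E$ this shows $E\in\cc_d(H,B,L)$. I expect the main obstacle to be exactly the two places where infinite-dimensionality matters: manufacturing the compact convex ambient set so that both the homotopy tracks and the nested neighbourhoods of $E$ stay compact, and then invoking the continuity of \v{C}ech homology for a decreasing sequence of compacta (which is classical, e.g.\ in the Eilenberg--Steenrod development of \v{C}ech homology).
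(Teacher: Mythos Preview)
Your proof is correct, but it proceeds along a genuinely different line from the paper's.

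The paper never builds a compact convex ambient set and never invokes homotopy invariance. Instead it sets $X_k=\overline{\bigcup_{n\geq k}E_n}$ and first proves an auxiliary claim: if $F\supseteq B$ is compact and $\pi_m(F)\in\cc_d(V_m,B_m,L_m)$ for every $m$, then $F\in\cc_d(H,B,L)$, using that $B=\varprojlim B_n$, $F=\varprojlim \pi_n(F)$ and the continuity of \v{C}ech homology. It then checks that each $X_k$ satisfies the hypothesis of this claim (since $E_m\subseteq\pi_m(X_k)$ for $m\geq k$, and for $m<k$ one factors through $\pi_k$), so $X_k\in\cc_d(H,B,L)$; finally $E=\bigcap_k X_k$ and a second continuity argument gives $E\in\cc_d(H,B,L)$. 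Thus the paper uses two nested inverse-limit passages and only the continuity axiom.

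Your route replaces the first inverse-limit step by a single homotopy: you embed everything in a compact convex $K_1$ (via Mazur's theorem on the closed convex hull of a compact set), take metric neighbourhoods $K^{(k)}$ of $E$ inside $K_1$, and for each $k$ exhibit a straight-line homotopy in $K^{(k)}$ between the inclusion $B\hookrightarrow K^{(k)}$ and the composite $B\xrightarrow{\pi_n}B_n\hookrightarrow E_n\hookrightarrow K^{(k)}$, which kills $L$. Then one continuity passage finishes. Your argument is arguably more geometric and uses homotopy invariance plus Mazur, whereas the paper's is purely inverse-systemic and needs neither; on the other hand, the paper must verify the slightly delicate fact that $\pi_m(X_k)\in\cc_d(V_m,B_m,L_m)$ for \emph{all} $m$, not just $m\geq k$, which your approach sidesteps entirely.
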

\begin{proof}
We claim that, for any compact set $F\subseteq H$ with $F\supseteq B$, if
$\pi_n(F)\in \cc_d(V_n,B_n,L_n)$, then $F\in \cc_d(H,B,L)$. Indeed, we put $F_n=
\pi_n(F)$, $f_{m,n}=\pi_n\vert_{F_m}$ and $g_{m,n}=\pi_n\vert_{G_m}$ for
$m\geq n$. Then $((F_n)_{n\geq 1},(f_{m,n})_{m\geq n})$ and $((B_n)_{n\geq 1},
(g_{m,n})_{m\geq n})$ are inverse systemes. And $B=\varprojlim B_n$ and
$F=\varprojlim F_n$. Since $i_{B_n,F_n}\circ g_{m,n} = f_{m,n}\circ i_{B_m,F_m}
$ for any $m\geq n$, we get that 
\[
	i_{B,F} = \varprojlim i_{B_n,F_n}
\]
and 
\[
	\cech_{d-1}(i_{B,F})= \varprojlim \cech_{d-1}(i_{B_n,F_n}).
\]
Since $F_n\in \cc_d(V_n,B_n,L_n)$, we have that $\cech_{d-1}(i_{B_n,F_n})(L_n)=0$,
thus  $\cech_{d-1}(i_{B,F})(L)= 0$, and $F\in \cc_d(H,B,L)$.

For any $k\geq 1$, we put $X_k= \overline{\cup_{n\geq k} E_n}$. For any $m\geq
k$, since $E_m\subseteq \pi_m(X_k)$, we see that $\pi_m(X_k)\in \cc_d(V_m,B_m,
L_m)$. For any $1\leq m\leq k$, $\pi_n = \pi_n \circ \pi_k$, thus  $\pi_m(X_k)
=\pi_m(\pi_k(X_k))$ and 
\[
	L_n = \cech_{d-1}(\pi_n)(L) = \cech_{d-1}(\pi_n)\circ \cech_{d-1}(\pi_k)(L)
	= \cech_{d-1}(\pi_n)(L_k).
\]
Since $\pi_k(X_k)\in \cc_d(V_k,B_k,L_k)$, we have that $\pi_m(X_k)\in \cc_d(V_m,
B_m,L_m)$. We get so that $\pi_m(X_k)\in \cc_d(V_m, B_m,L_m)$ for any $m\geq 1$.
Hence $X_k\in \cc_d(H,B,L)$. 

Since $B\subseteq X_{k+1}\subseteq X_k$ for any $k\geq 1$, and $\cap_{k\geq 1}
X_n = E$, we get that $((X_n)_{n\geq 1},(i_{X_m,X_n})_{m\geq n})$ is an inverse
system, and $E= \varprojlim X_n$. Since $i_{B,X_n} = i_{X_m,X_n}\circ i_{B,X_m}
$, we get that 
\[
	i_{B,E} = \varprojlim i_{B,X_n},
\]
and 
\[
	\cech_{d-1}(i_{B,E})= \varprojlim \cech_{d-1}(i_{B,X_n}).
\]
Since $X_n\in \cc_d(H,B,L)$, we get that $\cech_{d-1}(i_{B,X_n})(L)=0$. Hence
$\cech_{d-1}(i_{B,E})(L)=0$, and $E\in \cc_d(H,B,L)$.
\end{proof}
\begin{lemma}\label{le:li}
	Let $X_n$ be a minimizer for $\cc_d(V_n,B_n,L_n)$. Then 
	\[
		\limsup_{n\to \infty}\HM^d(X_n\setminus B_n)\leq \inf\{\HM^d(E\setminus B)
		:E\in \cc_d(H,B,L)\}.
	\]
\end{lemma}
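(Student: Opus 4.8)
The plan is to project, for each $n$, an arbitrary competitor $E\in\cc_d(H,B,L)$ down to $V_n$ via the orthogonal projection $\pi_n$, to observe that $\pi_n(E)$ is a competitor for $\cc_d(V_n,B_n,L_n)$, and to use that $\pi_n$ is $1$-Lipschitz in order to bound its Hausdorff measure by that of $E$. Since $X_n$ is a minimizer, this already bounds $\HM^d(X_n\setminus B_n)$ by $\HM^d(E\setminus B)$ for every single $n$, so in the end no genuine limiting argument is needed — the $\limsup$ is there only to match the statement.

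Fix $E\in\cc_d(H,B,L)$. First I would check that $\pi_n(E)\in\cc_d(V_n,B_n,L_n)$ for every $n$. The set $\pi_n(E)$ is compact and contains $\pi_n(B)=B_n$, so only the homological condition needs attention. The relevant maps fit into the commuting square $i_{B_n,\pi_n(E)}\circ(\pi_n\vert_B)=(\pi_n\vert_E)\circ i_{B,E}$, simply because both compositions send $b\in B$ to $\pi_n(b)$. Applying the functor $\cech_{d-1}(\,\cdot\,;G)$, and using that $\pi_n$ and $\pi_n\vert_B$ induce the same homomorphism on $\cech_{d-1}(B;G)$, so that $\cech_{d-1}(\pi_n\vert_B)(L)=L_n$, evaluation at $L$ gives
\[
	\cech_{d-1}(i_{B_n,\pi_n(E)})(L_n)=\cech_{d-1}(\pi_n\vert_E)\big(\cech_{d-1}(i_{B,E})(L)\big)=\cech_{d-1}(\pi_n\vert_E)(0)=0,
\]
because $E$ spans $L$. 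Hence $\pi_n(E)\in\cc_d(V_n,B_n,L_n)$; this is just the easy direction of the kind of inverse-system/functoriality bookkeeping already carried out in the preceding lemma.

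Next I would record the set inclusion $\pi_n(E)\setminus B_n\subseteq\pi_n(E\setminus B)$: if $y=\pi_n(x)$ with $x\in E$ and $y\notin B_n$, then $x\notin B$ — otherwise $y=\pi_n(x)\in\pi_n(B)=B_n$ — so $x\in E\setminus B$. Since $\pi_n$ is $1$-Lipschitz, $\HM^d(\pi_n(E\setminus B))\leq\HM^d(E\setminus B)$, hence $\HM^d(\pi_n(E)\setminus B_n)\leq\HM^d(E\setminus B)$. As $X_n$ minimizes $\cc_d(V_n,B_n,L_n)$ and $\pi_n(E)$ is a competitor there,
\[
	\HM^d(X_n\setminus B_n)\leq\HM^d(\pi_n(E)\setminus B_n)\leq\HM^d(E\setminus B)
\]
for every $n$. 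Taking $\limsup_{n\to\infty}$ and then the infimum over $E\in\cc_d(H,B,L)$ yields the claim. The only step that needs genuine care is the homological one — verifying that $\pi_n(E)$ really lies in $\cc_d(V_n,B_n,L_n)$ — and even that is routine functoriality of \v{C}ech homology; everything else is elementary.
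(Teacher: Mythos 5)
Your proof is correct, and it rests on the same core mechanism as the paper's: push a competitor $E\in\cc_d(H,B,L)$ forward by $\pi_n$, verify via functoriality of \v{C}ech homology that $\pi_n(E)\in\cc_d(V_n,B_n,L_n)$, and combine the inclusion $\pi_n(E)\setminus B_n\subseteq\pi_n(E\setminus B)$ with $\Lip(\pi_n)=1$. Where you differ is that the paper inserts an intermediate competitor $E_{n,\delta}=\varphi_{n,\delta}(E)$, with $\varphi_{n,\delta}$ equal to the identity on $B$ and to $\pi_n$ outside the neighbourhood $B^{\delta}$, and estimates $\HM^d(E_{n,\delta}\setminus B)$ by splitting into $E\cap B^{\delta}\setminus B$ (where the Lipschitz constant is only $1+\delta_n/\delta$) and $E\setminus B^{\delta}$; this recovers $\HM^d(E\setminus B)$ only in the limit $n\to\infty$, which is why the conclusion is phrased as a $\limsup$. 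Since the paper's own chain of inequalities already passes through $\HM^d(\pi_n(E)\setminus B_n)\geq\HM^d(X_n\setminus B_n)$, your shortcut $\HM^d(X_n\setminus B_n)\leq\HM^d(\pi_n(E)\setminus B_n)\leq\HM^d(\pi_n(E\setminus B))\leq\HM^d(E\setminus B)$ bypasses the deformation entirely and yields the stronger, uniform-in-$n$ bound $\sup_n\HM^d(X_n\setminus B_n)\leq\inf\{\HM^d(E\setminus B):E\in\cc_d(H,B,L)\}$. The one step that genuinely needs justification, namely that $\pi_n(E)$ spans $L_n$, is exactly your commuting square $i_{B_n,\pi_n(E)}\circ(\pi_n\vert_B)=(\pi_n\vert_E)\circ i_{B,E}$; the paper uses this fact implicitly (it asserts $\pi_n(E_{n,\delta})=\pi_n(E)\in\cc_d(V_n,B_n,L_n)$ without argument), so making it explicit is a genuine improvement rather than an omission.
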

\begin{proof}
	For any $\delta>0$, the function $\eta_{\delta}$ defined by 
	\[
		\eta_{\delta}(t)= \begin{cases}
			0, & t\leq 0,\\
			t/\delta, & 0<t<\delta,\\
			1,& t\geq \delta
		\end{cases}
	\]
	is a Lipschitz function. Let $\varphi_{n,\delta}:H\to H$ be the mapping
	defined by 
	\begin{equation}\label{eq:df}
		\varphi_{n,d}(x)=\eta_{\delta}(\dist(x,B))\pi_n(x)
		+(1-\eta_{\delta}(\dist(x,B)))x.
	\end{equation}
	We put $B^{\delta}= B+\oball(0,\delta)$ and $\pi_n^{\perp}= \id -\pi_n$.
	Then $\|\varphi_{n,\delta} -\id\|_{\infty}\leq \delta_n$, 
	$\varphi_{n,\delta}\vert_B =\id_B$ and $\varphi_{n,\delta}\vert_{H\setminus
	B^{\delta}}= \id_{H\setminus B^{\delta}}$, and any $x,y\in H$, 
	\[
		\begin{aligned}
			\varphi_{n,\delta}(x)-\varphi_{n,\delta}(y)&= (x-y)-\left(\eta_{\delta}
			(\dist(x,B))\pi_n^{\perp}(x)-\eta_{\delta}(\dist(y,B))\pi_n^{\perp}(y)\right)\\
			&= (x-y)- \eta_{\delta}(\dist(x,B))\left(\pi_n^{\perp}(x)-
			\pi_n^{\perp}(x)\right)\\
			&\quad -\left(\eta_{\delta}(\dist(x,B))-\eta_{\delta}
			(\dist(y,B))\right)\pi_n^{\perp}(y),
		\end{aligned}
	\]
	thus
	\[
		\|\varphi_{n,\delta}(x)-\varphi_{n,\delta}(y)\|\leq \|x-y\|+ \delta_n
		\Lip(\eta_{\delta})|\dist(x,B)-\dist(y,B)|\leq (1+\delta_n/\delta)\|x-y\|.
	\]
	Hence $\varphi_{n,\delta}$ is Lipschitz and $\Lip(\varphi_{n,\delta})\leq
	1+\delta_n/\delta$. 

	For any $E\in \cc_d(H,B,L)$, setting $E_{n,\delta}=\varphi_{n,\delta}(E)$, we
	have that $E_{n,\delta} \in \cc_d(H,B,L) $ and $\pi_n(E_{n,\delta})= \pi_n(E)
	\in \cc_d(V_n,B_n,L_n)$. Hence 
	\[
		\HM^d(E_{n,\delta}\setminus B) \geq \HM^d(\pi_n(E_{n,\delta}\setminus B))
		\geq \HM^d(\pi_n(E_{n,\delta})\setminus B_n)
		\geq \HM^d(\pi_n(E)\setminus B_n)\geq \HM^d(X_n\setminus B_n).
	\]
	But 
	\[
		\begin{aligned}
			\HM^d(E_{n,\delta}\setminus B)&= \HM^d(\varphi_{n,\delta}(E\setminus B))
			= \HM^d(\varphi_{n,\delta}
			(E\cap B^{\delta}\setminus B))+ \HM^d(\pi_n(E\setminus B^{\delta}))\\
			&\leq (1+\delta_n/\delta)^d\HM^d(E\cap B^{\delta}\setminus B)
			+\HM^d(E\setminus B^{\delta}),
		\end{aligned}
	\]
	we get that 
	\[
		\limsup_{n\to \infty}(\HM^d(X_n\setminus B_n))\leq \limsup_{n\to \infty}
		(1+\delta_n/\delta)^d\HM^d(E\cap B^{\delta}\setminus B)
		+\HM^d(E\setminus B^{\delta}) = \HM^d(E\setminus B).
	\]
	Hence 
	\[
		\limsup_{n\to \infty}\HM^d(X_n\setminus B_n)\leq \inf\{\HM^d(E\setminus B)
		:E\in \cc_d(H,B,L)\}.
	\]
\end{proof}
\begin{proof}[Proof of Theorem \ref{thm:PP}]
	Let $X_n$ be a minimizer for $\cc_d(V_n,B_n,L_n)$. We put $\mu_n=\HM^d\mr (X_n
	\setminus B_n)$, assume $\mu_{n_k}\wc \mu$, and put $X=B\cup \spt\mu$. Then
	$X_{n_k}$ converges to $X$ in Hausdorff distance, thus $X\in \cc_d(H,B,L)$. 
	By the lemma above, we have that 
	\[
		\begin{aligned}
			\mu(X\setminus B)&=\mu(H\setminus B)\leq \liminf_{k\to \infty}\mu_{n_k}
			(H\setminus B)\leq \liminf_{k\to \infty}\HM^d(X_n\setminus B_n)\\
			&\leq \inf\{\HM^d(E\setminus B): E\in \cc_d(H,B,L)\}\leq \HM^d(X\setminus B).
		\end{aligned}
	\]
	By Lemma \ref{le:md}, we have $\Theta_{\mu}^d(x)\geq 1$ for $\HM^d$-a.e.
	$x\in X$. Thus $\HM^d(X\setminus B)\leq \mu(X\setminus B)$.
	Hence $X$ is a minimizer for $\cc_d(H,B,L)$.
\end{proof}
\bibliographystyle{plain}
\bibliography{gmt}

\end{document}